\newcommand {\emptycomment}[1]{} 
\newcommand{\delete}[1]{}
\newif\ifflabel\flabelfalse
\newtheorem{theorem}{Theorem}[section]
\newtheorem{lemma}[theorem]{Lemma}
\newtheorem{corollary}[theorem]{Corollary}
\newtheorem{proposition}[theorem]{Proposition}
\theoremstyle{definition}
\newtheorem{definition}[theorem]{Definition}
\newtheorem{example}[theorem]{Example}
\newtheorem{remark}[theorem]{Remark}
\newcommand{\End}{\mathrm{End}}
\newcommand{\Hom}{\mathrm{Hom}}
\newcommand{\id}{\mathrm{id}}
\newcommand{\ad}{\mathrm{ad}}
\begin{document}

\title[Quasi-triangular and factorizable perm bialgebras]{Quasi-triangular and factorizable perm bialgebras}

\author[Y.~Lin]{Yuanchang Lin}
\address{School of Mathematics, North University of China, Taiyuan 030051, China}
\email{linyuanchang@nuc.edu.cn}

\subjclass[2020]{16T10, 16T25, 16W99, 17B38}

\keywords{Perm Yang-Baxter equation; quasi-triangular perm bialgebras; factorizable perm bialgebras; relative Rota-Baxter operator; quadratic Rota-Baxter perm algebra}

\date{\today}

\begin{abstract}
In this paper, we introduce the notions of quasi-triangular and factorizable perm bialgebras, based on notions of the perm Yang-Baxter equation and $(R, \ad)$-invariant condition. 
A factorizable perm bialgebra induces a factorization of the underlying perm algebra and the double of a perm bialgebra naturally admits a factorizable perm bialgebra structure. 
The notion of relative Rota-Baxter operators of weights on perm algebras is introduced to characterize solutions of the perm Yang-Baxter equation, whose skew-symmetric parts are $(R, \ad)$-invariant.
These operators are in one-to-one correspondence with linear transformations fulfilling a Rota-Baxter-type identity in the case of quadratic perm algebras.
Furthermore, we introduce the notion of quadratic Rota-Baxter perm algebras of weights, demonstrate that a quadratic Rota-Baxter perm algebra of weight $0$ induces a triangular perm bialgebra, and establish a one-to-one correspondence between quadratic Rota-Baxter perm algebras of nonzero weights and factorizable perm bialgebras. 
\end{abstract}

\maketitle

\tableofcontents

\section{Introduction}

The purpose of this paper is to establish quasi-triangular and factorizable theories for perm bialgebras.

A bialgebra structure consists of an algebra structure and a coalgebra structure coupled by certain compatibility conditions. 
Such structures play important roles in many areas and have connections with other structures arising from mathematics and physics. 
For example, Lie bialgebras, introduced by Drinfeld \cite{drinfeld1983hamiltonian}, are the algebra structures of Poisson-Lie groups, and play a fundamental role as the infinitesimalisation of quantum groups \cite{chari1995guide, kassel2012quantum}. 
For associative algebras, the notion of infinitesimal bialgebra was introduced by Joni and Rota to provide an algebraic framework for the calculus of divided differences \cite{joni1979coalgebras}. 
Variants of this notion, such as antisymmetric infinitesimal bialgebras (also termed balanced infinitesimal bialgebras in the sense of the opposite algebras or associative D-bialgebras in the sense of Drinfeld), have been systematically studied having their close analogy with Lie bialgebras,
and are widely applied in 2d topological field and string theory \cite{aguiar2000infinitesimal, aguiar2001associative, bai2010double, joni1979coalgebras, kock2004frobenius,lauda2008open, zhelyabin1997jordan}.
Other known bialgebra structures include pre-Lie bialgebras, Leibniz bialgebras and perm bialgebras \cite{bai2008left, tang2022leibniz, lin2025infinite}.

Meanwhile, in the realm of Lie bialgebras, quasi-triangular Lie bialgebra structures have been pivotal objects in mathematical physics \cite{drinfeld1986quantum, semenov1983classical}. 
Among these quasi-triangular Lie bialgebra structures, factorizable Lie bialgebras constitute a particularly important subclass, linking classical 
$r$-matrices to factorization problems and playing a key role in integrable systems \cite{bai2010nonabelian, reshetikhin1988quantum, semenov2003integrable}. Recently, quasi-triangular and factorizable theories have been extended to antisymmetric infinitesimal bialgebras \cite{sheng2023quasi}, pre-Lie bialgebras \cite{wang2024quasi} and Leibniz bialgebras \cite{bai2024quasi}.

A perm algebra is an associative algebra with the left-commutative identity \cite{chapoton2002endofoncteur} and a unified bialgebra theory for perm algebras, called perm bialgebras, was developed in \cite{lin2025infinite}. 
The notion of the perm Yang-Baxter equation was introduced, whose symmetric solutions give rise to perm bialgebras, and $\mathcal{O}$-operators were introduced as the operator forms of symmetric solutions of the perm Yang-Baxter equation \cite{lin2025infinite}.
However, the corresponding quasi-triangular and factorizable theories for perm bialgebras remain to be discovered.

In this paper, we demonstrate the feasibility of quasi-triangular and factorizable theories in the context of perm bialgebras, synthesizing concepts from quasi-triangular Lie bialgebras, antisymmetric infinitesimal bialgebras and Leibniz bialgebras.
More precisely, we introduce the notion of quasi-triangular perm bialgebras based on the perm Yang-Baxter equation and $(R, \ad)$-invariant condition.
In particular, if the skew-symmetric part of the solution of the perm Yang-Baxter equation in a quasi-triangular perm bialgebra is nondegenerate, then a factorizable perm bialgebra is obtained.
We establish that every factorizable perm bialgebra induces a factorization of its underlying perm algebra and the double of a perm bialgebra is naturally endowed with a canonical factorizable perm bialgebra structure.
Furthermore, the notion of relative Rota-Baxter operators of weights on perm algebras is introduced to characterize solutions of the perm Yang-Baxter equation, whose skew-symmetric parts are $(R, \ad)$-invariant.

Recent work has demonstrated that factorizable Lie (resp. antisymmetric infinitesimal, Leibniz, pre-Lie) bialgebras could be characterized by quadratic Rota-Baxter Lie (resp. Frobenius, Leibniz, pre-Lie) algebra of nonzero weight \cite{lang2023factorizable, sheng2023quasi, bai2024quasi, wang2024quasi}.
This motivates our investigation of analogous Rota-Baxter characterizations of factorizable perm bialgebras.
For this purpose, we introduce the notion of quadratic Rota-Baxter perm algebras by equipping quadratic perm algebras with Rota-Baxter operators satisfying a compatibility condition, and establish a one-to-one correspondence between factorizable perm bialgebras and quadratic Rota-Baxter perm algebras of nonzero weight.

The paper is organized as follows.
In Section~\ref{sec:qtpb}, we introduce the notion of quasi-triangular perm bialgebras as a special subclass of perm bialgebras.
Section~\ref{sec:fpb} presents the concept of factorizable perm bialgebras, which is a distinguished subclass of quasi-triangular perm  bialgebras.
We demonstrate that a factorizable perm bialgebra induces a factorization of the underlying perm algebra.
Furthermore, we prove that the double of a perm bialgebra is naturally endowed with a factorizable perm bialgebra structure.
Section~\ref{sec:rboqrpa} establishes the operator forms of solutions of the perm Yang-Baxter equation under the $(R, \ad)$-invariant condition.
We demonstrate that relative Rota-Baxter operators of weights on perm algebras could be used to characterize solutions of the perm Yang-Baxter equation, whose skew-symmetric parts are $(R, \ad)$-invariant, and show that these operators are in one-to-one correspondence with linear transformations satisfying a Rota-Baxter-type identity in the case of quadratic perm algebras.
In Section~\ref{sec:qrflb}, we demonstrate the Rota-Baxter characterization of factorizable perm bialgebras.
We introduce the notion of quadratic Rota-Baxter perm algebras of weights, demonstrate that a quadratic Rota-Baxter perm algebra of weight $0$ induces a triangular perm bialgebra, and establish a one-to-one correspondence between quadratic Rota-Baxter perm algebras of nonzero weights and factorizable perm bialgebras.

Throughout this paper, we work over a base field $\mathbb{K}$ of characteristic zero, and all vector spaces and algebras are assumed to be finite-dimensional. 
We adopt the following conventions and notation.
\begin{enumerate}
	\item 
	Let $(A, \diamond)$ be a vector space equipped with a bilinear operation $\diamond: A \otimes A \to A$.
	Let $L_\diamond(a)$ and $R_\diamond(a)$ denote the left and right multiplication operator respectively, that is 
	\begin{equation*}
		L_\diamond(a)b = R_\diamond(b)a = a \diamond b, \;\; \forall a, b \in A.
	\end{equation*}
	In particular, $\ad_\diamond(a)$ is defined by $\ad_\diamond(a) = (L_\diamond - R_\diamond)(a)$.
	We also simply denote them by $L(a)$, $R(a)$ and $\ad(a)$ respectively without confusion.

	\item 
	Let $V$ be a vector space. 
	Let $\sigma: V \otimes V \to V \otimes V$ be the twisting operator defined by
	\begin{equation*}
		\sigma(u \otimes v) = v \otimes u, \;\; \forall u, v \in V.
	\end{equation*}

	\item 
	Let $V, W$ be two vector spaces and $T: V \to W$ be a linear map.
	Denote the dual map by $T^*: W^* \to V^*$, which is defined by 
	\begin{equation*}
		\langle v, T^*(w^*)\rangle = \langle T(v), w^*\rangle, \;\; \forall v \in V, w^* \in W^*.
	\end{equation*}

	\item 
	Let $A,V$ be a vector space.
	For a linear map $\mu: A \to \End(V)$, define a linear map $\mu^*: A \to \End(V^*)$ by 
	\begin{equation*}
		\langle \mu^*(a)v^*, v\rangle = \langle v^*, \mu(a) v\rangle, \;\; \forall a \in A, v \in V, v^* \in V^*,
	\end{equation*}
	that is, $\mu^*(a) = (\mu(a))^*$.
	
\end{enumerate}

\section{Quasi-triangular perm bialgebras}\label{sec:qtpb}
In this section, we first recall the notion of perm bialgebras and then introduce the notion of quasi-triangular perm bialgebras as a special subclass of perm bialgebras, based on notions of the perm Yang-Baxter equation and $(R, \ad)$-invariant condition.

\begin{definition}
	A \textbf{perm algebra} $(A, \cdot_{A})$ is a vector space $A$ with a multiplication $\cdot_A: A \otimes A \to A$ such that
	\begin{equation*}
		 a \cdot_A (b \cdot_A c)= (a \cdot_A b) \cdot_A c = (b \cdot_A a) \cdot_A c, \;\; \forall a, b, c \in A. 
	\end{equation*}
\end{definition}

\begin{definition}
	A \textbf{perm coalgebra} $(A, \Delta)$ is a vector space $A$ with a comultiplication $\Delta: A \to A \otimes A$ such that
	\begin{equation*}
		(\Delta \otimes \id)\Delta = (\id \otimes \Delta) \Delta = (\sigma \otimes \id)(\Delta \otimes \id)\Delta.
	\end{equation*}
	A \textbf{perm bialgebra} is a triple $(A, \cdot_{A}, \Delta)$ where $(A, \cdot_A)$ is a perm algebra and $(A, \Delta)$ is a perm coalgebra such that the following equations hold:
	\begin{align*}
		\Delta(a \cdot_A b) &= (\ad(a) \otimes \id)\Delta(b) + (\id \otimes R(b))\Delta(a), \\
		\sigma (R(b) \otimes \id)\Delta(a) &= (R(a) \otimes \id)\Delta(b), \\
		\Delta(a \cdot_A b) &= (\id \otimes L(a))\Delta(b) + (\ad(b) \otimes \id)(\Delta(a)-\sigma \Delta(a)), \;\; \forall a, b \in A.
	\end{align*}
\end{definition}

\begin{remark}
	Note that $(A, \Delta)$ is a perm coalgebra if and only if $(A^*, \cdot_{A^*})$ is a perm algebra, 
	where the multiplication $\cdot_{A^*}: A^* \otimes A^* \to A^*$ on $A^*$ is defined as the linear dual of $\Delta$ via
	\begin{equation}
		\langle \Delta(a), a^* \otimes b^*\rangle = \langle a, a^* \cdot_{A^*} b^*\rangle, 
		\;\; \forall a \in A, \; a^*, b^* \in A^*. \label{eq:cmul}
	\end{equation}
	Consequently, a perm bialgebra $(A, \cdot_{A}, \Delta)$ can alternatively be denoted by $((A, \cdot_{A}), (A^*, \cdot_{A^*}))$, where $(A^*, \cdot_{A^*})$ is the perm algebra structure on $A^*$ induced by the perm coalgebra $(A, \Delta)$ through Eq.~\eqref{eq:cmul}.
\end{remark}

Let $r = \sum_{i} a_i \otimes b_i \in A
\otimes A$ and $r^\prime = \sum_{j} a_j^\prime
\otimes b_j^\prime \in A \otimes A$.
Set
\begin{eqnarray*}
	&&r_{12} \cdot r_{23}^\prime := \sum_{i, j} a_i \otimes b_i \cdot a_j^\prime \otimes b_j^\prime, \;\;\; r_{13} \cdot r_{23}^\prime := \sum_{i, j} a_i \otimes a_j^\prime \otimes b_i \cdot b_j^\prime, \\
	&&r_{13} \cdot r_{12}^\prime := \sum_{i, j} a_i \cdot a_j^\prime \otimes b_j^\prime \otimes b_i, \;\;\; r_{12} \cdot r_{13}^\prime := \sum_{i, j} a_i \cdot a_j^\prime \otimes b_i \otimes b_j^\prime, \\
	&&r_{23} \cdot r_{13}^\prime := \sum_{i, j} a_j^\prime \otimes a_i \otimes b_i \cdot b_j^\prime, \;\;\; r_{23} \cdot r_{12}^\prime := \sum_{i, j} a_j^\prime \otimes a_i \cdot b_j^\prime \otimes b_i.
\end{eqnarray*}

\begin{lemma}\label{lem:cbd}
	{\rm (\cite{lin2025infinite})}
	Let $(A, \cdot_A)$ be a perm algebra and $r \in A \otimes A$.
	Define a comultiplication $\Delta_r: A \to A \otimes A$ by
	\begin{equation}
		\Delta_r(a) = (\id \otimes R(a) - \ad(a) \otimes \id)(r), \;\; \forall a \in A. \label{eq:pcbd}
	\end{equation}
	Then $(A, \cdot_A, \Delta_r)$ is a perm bialgebra if and only the following equations hold:
	\begin{align*}
		0 &= (R(a) \otimes R(b))(r-\sigma(r)) , \\
		0 &= ( (R(b \cdot a)-R(a \cdot b)) \otimes \id - \ad(b) \otimes \ad(a) )(r-\sigma(r)) , \\
		0 &= (\id \otimes \id \otimes R(a) - (\ad(a) \otimes \id \otimes \id)(\sigma \otimes \id))(r_{12} \cdot r_{23} - r_{13} \cdot r_{23} + r_{12} \cdot r_{13} - r_{13} \cdot r_{12} )  \nonumber \\
		&\;\;\;\;- (\ad(a) \otimes \id \otimes \id)(\sigma \otimes \id) ( (\sigma(r)-r)_{12} \cdot r_{23} + (\sigma(r)-r)_{12} \cdot r_{13} - r_{13} \cdot (\sigma(r)-r)_{12}),  \\ 
		0 & = (\id \otimes \id \otimes R(a))(\id \otimes \id \otimes \id - \sigma \otimes \id )(r_{12} \cdot r_{23} - r_{13} \cdot r_{23} + r_{12}\cdot r_{13} - r_{13} \cdot r_{12}  ) \nonumber \\
		&\;\;\;\;-(\id \otimes \ad(a) \otimes \id)( (r - \sigma(r))_{12} \cdot r_{23}) - (\ad(a) \otimes \id \otimes \id)((r-\sigma(r))_{12} \cdot r_{13}), 
	\end{align*}
	for all $a, b \in A$.
\end{lemma}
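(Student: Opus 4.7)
The plan is to substitute the coboundary formula $\Delta_r(a) = (\id \otimes R(a) - \ad(a)\otimes \id)(r)$ into each of the defining equations of a perm bialgebra, namely the two perm-coalgebra axioms and the three compatibility conditions between $\Delta$ and $\cdot_A$, and translate each into a tensor identity on $r$.

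Two elementary consequences of the perm axiom drive almost every step. First, the axiom implies associativity, so one may freely regroup products. Second, left-commutativity $(ab)c=(ba)c$ yields
\[
R(c)\circ \ad(a) = 0 \quad \text{for all } a, c \in A,
\]
since $(\ad(a)u)\cdot c = (au-ua)\cdot c = 0$. Hence every term of the shape $(\ad(\cdot)x)\cdot c$ vanishes, which drastically prunes the expressions produced by $\Delta_r$. Using this, compatibility condition (i) can be checked to hold identically, so it yields no constraint on $r$. Compatibility (ii) reduces on each side to $(R(a)\otimes R(b))$ applied to $r$ or $\sigma(r)$, giving the first displayed equation. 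Compatibility (iii), after expanding $\Delta_r(a) - \sigma\Delta_r(a)$ and using the identities $R(a)R(b) = R(ba)$ and $(ab)u = (ba)u$ to convert mixed left-right products, produces precisely the second displayed equation.

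The bulk of the work is the two independent perm-coalgebra axioms packaged in $(\Delta_r\otimes \id)\Delta_r = (\id\otimes \Delta_r)\Delta_r = (\sigma \otimes \id)(\Delta_r \otimes \id)\Delta_r$. For each of the three expressions I would expand the double coproduct, obtaining a sum of three-tensor terms that are products of components from two copies of $r$ in different slots---exactly the data packaged by the notations $r_{12}\cdot r_{23}$, $r_{13}\cdot r_{23}$, $r_{12}\cdot r_{13}$, $r_{13}\cdot r_{12}$ introduced before the lemma---together with additional summands of the form $(r-\sigma(r))_{12}\cdot r_{kl}$ produced by the $\ad\otimes \id$ piece of $\Delta_r$. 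Grouping these, and again discarding every summand in which an $\ad$ factor is followed by a right multiplication, the two coalgebra axioms collapse to precisely the third and fourth displayed equations.

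The main obstacle is this final bookkeeping: each of the three expressions yields roughly a dozen three-tensor summands, and the reorganization into the stated $r_{ij}\cdot r_{kl}$ and $(r-\sigma(r))_{ij}\cdot r_{kl}$ pieces demands careful tracking of which slot the external parameter $a$ sits in and of which terms are annihilated by the $R\circ \ad = 0$ identity. Since every implication along the way is an equivalence, once the correspondences are established both directions of the lemma follow.
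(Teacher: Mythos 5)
The paper does not actually prove this lemma; it is quoted from \cite{lin2025infinite}, so there is no in-paper argument to compare against. Your strategy --- substitute $\Delta_r$ into the five defining identities of a perm bialgebra and simplify using the operator identities of a perm algebra --- is the expected direct verification, and the parts you carry out are correct. In particular, your key observation $R(c)\circ\ad(a)=0$ is right, and it does make the first compatibility condition hold identically: one computes $\ad(a\cdot_A b)-\ad(a)\ad(b)=R(b)\ad(a)=0$ using $L(a)L(b)=L(a\cdot_A b)$, $R(a)R(b)=R(b\cdot_A a)=R(a)L(b)$ and $L(a)R(b)=R(b)L(a)$. The second compatibility condition reduces, exactly as you say, to $(R(a)\otimes R(b))(r-\sigma(r))=0$, and the third reduces to the second displayed equation via $\ad(a\cdot_A b)-\ad(b)\ad(a)=R(b\cdot_A a)-R(a\cdot_A b)$ and $\ad(b)R(a)=R(b\cdot_A a)-R(a\cdot_A b)$.

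The weakness is that the heart of the lemma --- showing that the two independent coassociativity-type identities for $\Delta_r$ are equivalent to the third and fourth displayed equations --- is only asserted, not executed. That computation is not merely ``discarding summands where an $\ad$ is followed by a right multiplication'': expanding $(\Delta_r\otimes\id)\Delta_r$, $(\id\otimes\Delta_r)\Delta_r$ and $(\sigma\otimes\id)(\Delta_r\otimes\id)\Delta_r$ produces terms such as $\Delta_r(\ad(a)a_j)$ and $\ad(\ad(a)a_j)$, which require further identities (e.g.\ $L(\ad(a)a_j)=0$, hence $\ad(\ad(a)a_j)=-R(\ad(a)a_j)$) and a careful reassembly into the $r_{ij}\cdot r_{kl}$ and $(r-\sigma(r))_{12}\cdot r_{kl}$ blocks, including the $(\sigma\otimes\id)$-twisted copies of $[[r,r]]$ that appear in the stated equations. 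Until that bookkeeping is actually done, the claim that the residue is \emph{precisely} the third and fourth displayed equations is unverified, and this is exactly where a sign or slot error would hide. So the proposal is a sound plan with the easy two-thirds checked, but not yet a proof of the full statement.
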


\begin{definition}
	Let $(A, \cdot_A)$ be a perm algebra and $r \in A \otimes A$.
	The equation
	\begin{equation*}
		[[r, r]] := r_{12} \cdot_A r_{23} - r_{13} \cdot_A r_{23} + r_{12} \cdot_A r_{13} - r_{13} \cdot_A r_{12}  = 0
	\end{equation*}
	is called the \textbf{perm Yang-Baxter equation} in $(A, \cdot_A)$.
\end{definition}

\begin{definition}
	Let $(A, \cdot_A)$ be a perm algebra and $r \in A \otimes A$.
	Then $r$ is called \textbf{$(R, \ad)$-invariant} if
	\begin{equation*}
		(\id \otimes R(a) - \ad(a) \otimes \id)(r) = 0, \;\; \forall a \in A.
	\end{equation*}
\end{definition}

\begin{theorem}\label{thm:cbd}
	Let $(A, \cdot_A)$ be a perm algebra and $r \in A \otimes A$.
	Define a comultiplication $\Delta_r: A \to A \otimes A$ by Eq.~\eqref{eq:pcbd}.
	If $r$ is a solution of the perm Yang-Baxter equation in $(A, \cdot_A)$ and the skew-symmetric part $\frac{1}{2}(r - \sigma(r))$ is $(R, \ad)$-invariant, namely, 
	\begin{equation}
		(\id \otimes R(a) - \ad(a) \otimes \id)(r - \sigma(r)) = 0, \;\; \forall a \in A,\label{eq:rsrradi}
	\end{equation}
	then $(A, \cdot, \Delta_r)$ is a perm bialgebra.
\end{theorem}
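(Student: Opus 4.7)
The plan is to invoke Lemma~\ref{lem:cbd}, which reduces the claim to verifying its four compatibility equations. Writing $s := r - \sigma(r)$, I would first establish two preparatory facts. Applying $\sigma$ to the $(R, \ad)$-invariance hypothesis \eqref{eq:rsrradi} and using $\sigma(s) = -s$ yields the dual identity $(R(a) \otimes \id)(s) = (\id \otimes \ad(a))(s)$. Second, the left-commutativity axiom reads $(ab - ba)c = 0$, which is exactly $R(c)\ad(a) = 0$: every element of the form $\ad(a)(x)$ left-annihilates $A$, so $L(\ad(a)(x)) = 0$ and therefore $\ad(\ad(a)(x)) = -R(\ad(a)(x))$. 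Combined with associativity, the perm identities also give the product formula $\ad(a)(y \cdot_A x) = y \cdot_A \ad(a)(x)$, which will be used repeatedly.

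The first two equations of Lemma~\ref{lem:cbd} involve only $s$ and follow directly. For the first, $(R(a) \otimes R(b))(s) = (R(a) \otimes \id)(\id \otimes R(b))(s) = (R(a)\ad(b) \otimes \id)(s) = 0$. For the second, convert $(\ad(b) \otimes \ad(a))(s)$ via the dual identity into $(\ad(b)R(a) \otimes \id)(s)$; a short perm-algebra calculation then gives $R(b \cdot_A a) - R(a \cdot_A b) - \ad(b)R(a) = -R(a)\ad(b) = 0$.

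For the remaining two equations, the Yang--Baxter hypothesis $[[r,r]] = 0$ annihilates the triple-tensor part, leaving only the correction terms built from $s$ and $r$. Writing $s = \sum_k e_k \otimes f_k$, $r = \sum_i a_i \otimes b_i$, and $c_i := \ad(a)(a_i)$, the product formula gives $\ad(a)(f_k \cdot_A a_i) = f_k \cdot_A c_i$, which reduces the fourth equation to $\sum_i \bigl((\id \otimes R(c_i)) + (R(c_i) \otimes \id)\bigr)(s) \otimes b_i$. Since each $c_i$ left-annihilates $A$, combining $\ad(c_i) = -R(c_i)$ with the $(R, \ad)$-invariance (in its dual form) yields $(R(c_i) \otimes \id)(s) = -(\id \otimes R(c_i))(s)$, so this expression vanishes.

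The third equation, which carries the twist $\sigma \otimes \id$, is the main obstacle. Expanding the three correction terms and applying the same reductions reduces the left-hand side to $\sum_i \bigl[\sigma\bigl((\id \otimes R(c_i))(s)\bigr) - (\id \otimes R(c_i))(s)\bigr] \otimes b_i$, so the task collapses to showing that $(\id \otimes R(c_i))(s)$ is $\sigma$-invariant. Using the intertwining relation $\sigma \circ (\id \otimes R(c_i)) = (R(c_i) \otimes \id) \circ \sigma$ together with $\sigma(s) = -s$ and the identity $(R(c_i) \otimes \id)(s) = -(\id \otimes R(c_i))(s)$ derived in the previous paragraph, this $\sigma$-invariance falls out. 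The delicate part is the careful bookkeeping to reach the collapsed form; no new algebraic ingredient beyond the two preparatory facts is required.
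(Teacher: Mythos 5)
Your proposal is correct and follows essentially the same route as the paper: reduce to Lemma~\ref{lem:cbd}, let $[[r,r]]=0$ kill the cubic terms, and show the remaining correction terms vanish via the $(R,\ad)$-invariance of $r-\sigma(r)$ together with the perm identities $R(c)\ad(a)=0$ and $\ad(a)(y\cdot_A x)=y\cdot_A\ad(a)(x)$. The only stylistic difference is in the third equation, where the paper simply observes that the argument of the outer operator $(\ad(a)\otimes\id\otimes\id)(\sigma\otimes\id)$ equals $-\sum_j(\id\otimes R(a_j)-\ad(a_j)\otimes\id)(r-\sigma(r))\otimes b_j$ and is therefore already zero by \eqref{eq:rsrradi}, whereas you push $\ad(a)$ through first and then invoke the $\sigma$-invariance of $(\id\otimes R(c_i))(r-\sigma(r))$ --- both are valid.
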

\begin{proof}
	Let $a, b \in A$.
	Note that 
	\begin{align*}
		&(R(a) \otimes R(b))(r-\sigma(r)) = (R(a) \otimes \id)(\id \otimes R(b) - \ad(b) \otimes \id)(r-\sigma(r)), \\
		&( (R(b \cdot a)-R(a \cdot b)) \otimes \id - \ad(b) \otimes \ad(a) )(r-\sigma(r)) \\
		&= \sigma(\id \otimes \ad(b))(\ad(a) \otimes \id - \id \otimes R(a))(r - \sigma(r)), \\
		&(\ad(a) \otimes \id \otimes \id)(\sigma \otimes \id) ( (\sigma(r)-r)_{12} \cdot r_{23} + (\sigma(r)-r)_{12} \cdot r_{13} - r_{13} \cdot (\sigma(r)-r)_{12}) \\
		&= -\sum_{j} (\ad(a) \otimes \id \otimes \id) (\sigma \otimes \id)( (\id \otimes R(a_j) - \ad(a_j) \otimes \id)(r - \sigma(r)) \otimes b_j), \\
		&(\id \otimes \ad(a) \otimes \id)( (r - \sigma(r))_{12} \cdot r_{23}) + (\ad(a) \otimes \id \otimes \id)((r-\sigma(r))_{12} \cdot r_{13}) \\
		&=\sum_{j} (\id \otimes R(a \cdot_{A} a_j - a_j \cdot_{A} a) - \ad(a \cdot_{A} a_j - a_j \cdot_{A} a) \otimes \id)(r - \sigma(r)) \otimes b_j.
	\end{align*}
	The theorem follows immediately from Lemma~\ref{lem:cbd}.
\end{proof}

Motivated by Theorem~\ref{thm:cbd}, we introduce the following notion of a quasi-triangular perm bialgebra as a distinguished subclass of perm bialgebras.
\begin{definition}
	Let $(A, \cdot_A)$ be a perm algebra. 
	If $r \in A \otimes A$ is a solution of the perm Yang-Baxter equation in $(A, \cdot_{A})$ and the skew-symmetric part of $r$ is $(R, \ad)$-invariant, then the perm bialgebra $(A, \cdot_A, \Delta_r)$ with $\Delta_r$ defined by Eq.~\eqref{eq:pcbd} via $r$ is called a \textbf{quasi-triangular perm bialgebra}.
	In particular, if $r$ is symmetric, then $(A, \cdot_A, \Delta_r)$ is called a \textbf{triangular perm bialgebra}.
\end{definition}

The following result demonstrates that quasi-triangular perm bialgebras always come in pairs.
\begin{proposition}\label{prop:rsrqt}
	Let $(A, \cdot_{A})$ be a perm algebra and $r \in A \otimes A$.
	If the skew-symmetric part of $r$ is $(R, \ad)$-invariant, then $r$ is a solution of the perm Yang-Baxter equation in $(A, \cdot_{A})$ if and only if $\sigma(r)$ is a solution of the perm Yang-Baxter equation in $(A, \cdot_{A})$.
	Moreover, if $(A, \cdot_{A}, \Delta_r)$ is a quasi-triangular perm bialgebra, then $(A, \cdot_{A}, \Delta_{\sigma(r)})$ is also a quasi-triangular perm bialgebra.
\end{proposition}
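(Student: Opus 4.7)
The plan is to reduce both assertions to a single tensor identity and then dispatch that identity using the $(R, \ad)$-invariance of the skew-symmetric part. First I would set $s = \tfrac12(r + \sigma(r))$ and $\alpha = \tfrac12(r-\sigma(r))$, so that $r = s + \alpha$, $\sigma(r) = s - \alpha$, and by hypothesis $\alpha$ is $(R, \ad)$-invariant. The skew-symmetric part of $\sigma(r)$ equals $-\alpha$, and since $(R, \ad)$-invariance is a linear condition it transfers automatically from $r$ to $\sigma(r)$; this makes the ``moreover'' clause a formal consequence of the iff part, because once $\sigma(r)$ is known to solve the perm Yang-Baxter equation, Theorem~\ref{thm:cbd} equips $(A, \cdot_A, \Delta_{\sigma(r)})$ with a perm bialgebra structure, which is quasi-triangular by definition.

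For the iff, I would introduce the polarization
\[
B(x, y) := x_{12} \cdot_A y_{23} - x_{13} \cdot_A y_{23} + x_{12} \cdot_A y_{13} - x_{13} \cdot_A y_{12},
\]
so that $[[r, r]] = B(r, r)$. Expanding $B(s+\alpha, s+\alpha) - B(s-\alpha, s-\alpha)$ then yields
\[
[[r, r]] - [[\sigma(r), \sigma(r)]] = 2\bigl(B(s, \alpha) + B(\alpha, s)\bigr),
\]
so the iff reduces to proving the identity $B(s, \alpha) + B(\alpha, s) = 0$ for any symmetric $s$ and any $(R, \ad)$-invariant $\alpha$.

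Verifying this identity is the main obstacle. Writing $s = \sum_j e_j \otimes f_j$, with $\sum_j e_j \otimes f_j = \sum_j f_j \otimes e_j$ by symmetry, and $\alpha = \sum_i c_i \otimes d_i$, the invariance condition reads
\[
\sum_i c_i \otimes (d_i \cdot_A a) = \sum_i \bigl(a \cdot_A c_i - c_i \cdot_A a\bigr) \otimes d_i
\]
for every $a \in A$. Expanding $B(s, \alpha) + B(\alpha, s)$ produces eight tensor monomials in $A^{\otimes 3}$, sorted by the position in which the nontrivial product sits. Specializing the invariance at $a = e_j$ and at $a = f_j$, tensoring on the appropriate side with $f_j$ (respectively $e_j$), and summing over $j$ converts one group of four monomials into the negative of the other, with the perm identity $(a \cdot_A b) \cdot_A c = (b \cdot_A a) \cdot_A c$ and the symmetry of $s$ controlling the bookkeeping between left and right multiplication. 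The computation is direct but technical; tracking the sign pattern across the three tensor slots is the crux. Once the identity is established, $[[r, r]] = 0$ if and only if $[[\sigma(r), \sigma(r)]] = 0$, which together with the first paragraph settles both assertions of the proposition.
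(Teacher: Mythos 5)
Your argument is correct, but it follows a genuinely different route from the paper's. The paper expands $[[\sigma(r),\sigma(r)]]$ directly and rewrites it as $-(\id\otimes\sigma)[[r,r]]$ plus terms of the form $\sum_j(\id\otimes R(b_j)-\ad(b_j)\otimes\id)(\sigma(r)-r)\otimes a_j$, which vanish by Eq.~\eqref{eq:rsrradi}; your polarization $r=s+\alpha$ instead isolates the cross-terms and reduces everything to the single identity $B(s,\alpha)+B(\alpha,s)=0$, yielding the cleaner conclusion $[[\sigma(r),\sigma(r)]]=[[r,r]]$ (consistent with the paper's formula, since symmetric-plus-invariant-skew tensors also satisfy $(\id\otimes\sigma)[[r,r]]=-[[r,r]]$). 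Your identity is true and I verified it, so the approach works, and your handling of the ``moreover'' clause is exactly right. One caveat on the crux computation: the eight monomials do not all cancel by merely specializing $(\id\otimes R(a)-\ad(a)\otimes\id)\alpha=0$ at $a=e_j,f_j$ and using symmetry of $s$. Three of the four cancellations go through that way (e.g.\ $\sum_{i,j}c_i\otimes d_i\cdot_A e_j\otimes f_j$ absorbs both $\sum_{i,j}e_j\cdot_A c_i\otimes d_i\otimes f_j$ and $\sum_{i,j}c_i\cdot_A e_j\otimes d_i\otimes f_j$), but the remaining pair $\sum_{i,j}e_j\otimes f_j\cdot_A c_i\otimes d_i$ versus $\sum_{i,j}e_j\otimes c_i\otimes f_j\cdot_A d_i$ requires the derived identity $(\id\otimes L(a))\alpha=(L(a)\otimes\id)\alpha$, which you must first extract by combining the $(R,\ad)$-invariance of $\alpha$ with its skew-symmetry (skew-symmetry converts $(\id\otimes R(a)-\ad(a)\otimes\id)\alpha=0$ into $(R(a)\otimes\id-\id\otimes\ad(a))\alpha=0$, and adding the two gives the $L$-$L$ form). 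Also, the perm identity $(a\cdot_A b)\cdot_A c=(b\cdot_A a)\cdot_A c$ is not actually needed anywhere in this cancellation. What your route buys is the structural statement that the symmetric and skew-symmetric parts do not interact in $[[\cdot,\cdot]]$ under the invariance hypothesis; what the paper's route buys is a formula for $[[\sigma(r),\sigma(r)]]$ valid before imposing invariance, which makes the error terms visible.
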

\begin{proof}
	Set $r = \sum_{i} a_i \otimes b_i$.
	Then 
	\begin{align*}
		&[[\sigma(r), \sigma(r)]] \\
		&=\sum_{i,j} ( b_i \otimes a_i \cdot_{A} b_j \otimes a_j - b_i \otimes b_j \otimes a_i \cdot_{A} a_j + b_i \cdot_{A} b_j \otimes a_i \otimes a_j - b_j \cdot_{A} b_i \otimes a_i \otimes a_j ) \\
		&= \sum_{j} (\id \otimes R(b_j) - \ad(b_j) \otimes \id)(\sigma(r)) \otimes a_j - \sum_{i, j} b_i \otimes b_j \otimes a_i \cdot_{A} a_j \\
		&= \sum_{j} (\id \otimes R(b_j) - \ad(b_j) \otimes \id)(\sigma(r) - r) \otimes a_j \\
		&\quad + \sum_{i,j} (a_i \otimes b_i \cdot_{A} b_j \otimes a_j - b_j \cdot_{A} a_i \otimes b_i \otimes a_j + a_i \cdot_{A} b_j \otimes b_i \otimes a_j - b_i \otimes b_j \otimes a_i \cdot_{A} a_j) \\
		&= \sum_{j} (\id \otimes R(b_j) - \ad(b_j) \otimes \id)(\sigma(r) - r) \otimes a_j \\
		&\quad + (\id \otimes \sigma)\sum_{i,j} a_i \otimes a_j \otimes b_i \cdot_{A} b_j + (\id \otimes \sigma)\sum_{i, j} (-\id \otimes R(a_j) + \ad(a_j) \otimes \id)(\sigma(r))  \\
		&= \sum_{j} (\id \otimes R(b_j) - \ad(b_j) \otimes \id)(\sigma(r) - r) \otimes a_j \\
		&\quad + (\id \otimes \sigma)( \sum_{j} (\id \otimes R(a_j) - \ad(a_j) \otimes \id)(r - \sigma(r)) \otimes b_j) - (\id \otimes \sigma)[[r, r]].
	\end{align*}
	By the assumption, we show that $[[r, r]] = 0$ if and only if $[[\sigma(r), \sigma(r)]] = 0$.
	The remainder of the argument is obvious.
\end{proof}

For a vector space A, through the isomorphism $A \otimes A \cong \Hom(A^*, A)$, any $r = \sum_i a_i \otimes b_i \in A \otimes A$ can be identified as a map from $A^*$ to $A$, which we denote by $T_{r}: A^* \to A$, explicitly,
\begin{equation*}
	T_r: A^* \to A, \; a^* \mapsto \sum_i \langle a^*, a_i \rangle b_i, \;\; \forall a^* \in A.
\end{equation*}
That is, $\langle T_{r}(a^*), b^*\rangle = \langle r, a^* \otimes b^*\rangle$ for all $a^*, b^* \in A^*$.

\begin{proposition}\label{prop:pybeof}
	Let $(A, \cdot_A)$ be a perm algebra and $r \in A \otimes A$.
	Then $r$ is a solution of the perm Yang-Baxter equation in $(A, \cdot_A)$ if and only if the following equation holds:
	\begin{equation}
		T_{r}(a^*) \cdot_A T_{r}(b^*) = T_{r}( L^*(T_{r}(a^*)) b^* + \ad^*(T_{\sigma(r)}(b^*)) a^* ), \;\; \forall a^*, b^* \in A^*. \label{eq:pybeof}
	\end{equation}
	Furthermore, if the skew-symmetric part of $r$ is $(R, \ad)$-invariant, then $r$ is a solution of the perm Yang-Baxter equation in $(A, \cdot)$ if and only if the following equation holds:
	\begin{equation}
		T_{\sigma(r)}(a^*) \cdot_A T_{\sigma(r)}(b^*) = T_{\sigma(r)}(L^*(T_{r}(a^*)) b^* + \ad^*(T_{\sigma(r)}(b^*)) a^*), \;\; \forall a^*, b^* \in A^*. \label{eq:pybeofs}
	\end{equation}
\end{proposition}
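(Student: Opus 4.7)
The plan is to convert the tensor identity $[[r,r]] = 0$ into an identity in $A$ by pairing the first two tensor slots with arbitrary covectors and leaving the third slot free, and then to handle the second equivalence by extracting two operator-theoretic identities from the $(R,\ad)$-invariance condition.

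I would first expand $r = \sum_i a_i \otimes b_i$ and dualize each of the four summands in $[[r,r]]$ against $a^* \otimes b^* \otimes \cdot$, using the identities $\langle x \cdot_A y, z^*\rangle = \langle x, R^*(y) z^*\rangle = \langle y, L^*(x) z^*\rangle$. The term $r_{13} \cdot_A r_{23}$ contracts directly to $T_r(a^*) \cdot_A T_r(b^*)$, and $r_{12} \cdot_A r_{23}$ contracts to $T_r(L^*(T_r(a^*)) b^*)$. For the remaining terms $r_{12} \cdot_A r_{13}$ and $r_{13} \cdot_A r_{12}$, choosing the $L^*$-form in the first and the $R^*$-form in the second brings $T_{\sigma(r)}(b^*)$ out of the middle slot, yielding $T_r(L^*(T_{\sigma(r)}(b^*)) a^*)$ and $T_r(R^*(T_{\sigma(r)}(b^*)) a^*)$ respectively, whose difference collapses via $L - R = \ad$ into $T_r(\ad^*(T_{\sigma(r)}(b^*)) a^*)$. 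Assembling the four contributions yields
\begin{equation*}
\langle [[r,r]], a^* \otimes b^* \otimes \cdot \rangle \;=\; T_r\bigl(L^*(T_r(a^*)) b^* + \ad^*(T_{\sigma(r)}(b^*)) a^*\bigr) - T_r(a^*) \cdot_A T_r(b^*),
\end{equation*}
and Eq.~\eqref{eq:pybeof} follows from the nondegeneracy of the canonical pairing between $A$ and $A^*$.

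For the second equivalence I would first unpack Eq.~\eqref{eq:rsrradi} into operator form. Setting $T_s := T_r - T_{\sigma(r)}$, pairing Eq.~\eqref{eq:rsrradi} with $\xi \otimes \eta \in A^* \otimes A^*$ yields $T_s(\ad^*(c) \xi) = T_s(\xi) \cdot_A c$ for all $c \in A$ and $\xi \in A^*$. Applying $\sigma$ to Eq.~\eqref{eq:rsrradi} and using the skew-symmetry of $r - \sigma(r)$ produces a companion identity which, combined with the above, yields $T_s(L^*(c) \xi) = c \cdot_A T_s(\xi)$. Specializing the first identity to $c = T_{\sigma(r)}(b^*)$, $\xi = a^*$ and the second to $c = T_r(a^*)$, $\xi = b^*$, the difference between the right-hand sides of Eqs.~\eqref{eq:pybeof} and \eqref{eq:pybeofs}, namely $T_s\bigl(L^*(T_r(a^*)) b^* + \ad^*(T_{\sigma(r)}(b^*)) a^*\bigr)$, simplifies to $T_r(a^*) \cdot_A T_s(b^*) + T_s(a^*) \cdot_A T_{\sigma(r)}(b^*)$. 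A direct expansion using $T_r = T_{\sigma(r)} + T_s$ then shows that this coincides with the difference $T_r(a^*) \cdot_A T_r(b^*) - T_{\sigma(r)}(a^*) \cdot_A T_{\sigma(r)}(b^*)$ of the left-hand sides, whence the two equations are equivalent.

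The main obstacle is the combinatorial bookkeeping in the first step: each of the four terms in $[[r,r]]$ forces a distinct choice of converting a product $\langle u \cdot_A v, z^*\rangle$ through either $L^*$ or $R^*$ so that the correct copy of $T_r$ or $T_{\sigma(r)}$ emerges at the proper index. Once this is set up correctly, the second equivalence reduces to an algebraic check using the two operator identities derived from the $(R, \ad)$-invariance condition.
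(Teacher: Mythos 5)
Your proof is correct. The first equivalence is handled exactly as in the paper: you contract $[[r,r]]$ against $a^*\otimes b^*$ in the first two slots (the paper pairs all three slots with $a^*\otimes b^*\otimes c^*$, a purely cosmetic difference), identify $r_{13}\cdot r_{23}$ with $T_r(a^*)\cdot_A T_r(b^*)$, $r_{12}\cdot r_{23}$ with $T_r(L^*(T_r(a^*))b^*)$, and combine $r_{12}\cdot r_{13}-r_{13}\cdot r_{12}$ into $T_r(\ad^*(T_{\sigma(r)}(b^*))a^*)$ via $L^*-R^*=\ad^*$; this matches the paper's three displayed pairings. For the second equivalence you diverge slightly: the paper recomputes the defect of Eq.~\eqref{eq:pybeofs} directly as a tensor expression and recognizes it as $\sum_j\bigl((\id\otimes R(a_j)-\ad(a_j)\otimes\id)(r-\sigma(r))\bigr)\otimes b_j-[[r,r]]$ with permuted slots, whereas you extract the two operator identities $T_s(\ad^*(c)\xi)=T_s(\xi)\cdot_A c$ and $T_s(L^*(c)\xi)=c\cdot_A T_s(\xi)$ from the invariance condition (these are precisely Eq.~\eqref{eq:adi} and Eq.~\eqref{eq:adil} of Lemma~\ref{lem:radi} and Remark~\ref{rmk:radi}) and show that the defects of Eqs.~\eqref{eq:pybeof} and \eqref{eq:pybeofs} coincide. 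I checked the telescoping: with $T_s=T_r-T_{\sigma(r)}$ the difference of the left-hand sides is $T_r(a^*)\cdot_A T_s(b^*)+T_s(a^*)\cdot_A T_{\sigma(r)}(b^*)$, which indeed equals $T_s\bigl(L^*(T_r(a^*))b^*+\ad^*(T_{\sigma(r)}(b^*))a^*\bigr)$ under your two identities, so the two equations have equal defects and the equivalence follows from the first part. Your route for the second half is marginally cleaner in that it avoids a second tensor computation and makes transparent exactly where the $(R,\ad)$-invariance enters; the paper's route has the advantage of exhibiting the exact tensor obstruction when invariance fails.
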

\begin{proof}
	Set $r = \sum_{i} a_i \otimes b_i$.
	For all $a^*, b^*, c^* \in A^*$, we have
	\begin{align*}
		&\langle T_{r}(a^*) \cdot_A T_{r}(b^*), c^*\rangle \!=\! \langle \sum_{i,j}\langle a^*, a_i \rangle \langle b^*, a_j \rangle b_i \cdot_A b_j, c^* \rangle \!=\! \sum_{i, j}\langle a_i \otimes a_j \otimes b_i \cdot_{A} b_j, a^* \otimes b^* \otimes c^* \rangle, \\
		&\langle T_{r}( L^*(T_{r}(a^*)) b^*), c^*\rangle \!=\! \sum_{i,j}\langle a^*, a_i \rangle\langle L^*(b_i) b^*, a_j\rangle\langle  b_j, c^* \rangle \!=\! \sum_{i,j}\langle a_i \otimes b_i \cdot_{A} a_j \otimes b_j, a^* \otimes b^* \otimes c^* \rangle, \\
		&\langle T_{r}( \ad^*(T_{\sigma(r)}(b^*)) a^* ), c^*\rangle \!=\! \sum_{i, j}\langle b^*, b_i \rangle \langle \ad^*(a_i)a^*, a_j\rangle \langle b_j, c^*\rangle \!=\! \sum_{i, j}\langle \ad(a_i) a_j \otimes b_i \otimes b_j, a^* \otimes b^* \otimes c^*\rangle.
	\end{align*}
	Thus, $r$ is a solution of the perm Yang-Baxter equation if and only if Eq.~\eqref{eq:pybeof} holds.
	
	Similarly, we have 
	\begin{align*}
		&\langle T_{\sigma(r)}(a^*) \cdot_A T_{\sigma(r)}(b^*), c^*\rangle = \sum_{i, j}\langle b_i \otimes b_j \otimes a_i \cdot_{A} a_j, a^* \otimes b^* \otimes c^* \rangle, \\
		&\langle T_{\sigma(r)}( L^*(T_{r}(a^*)) b^*), c^*\rangle = \sum_{i,j}\langle a_i \otimes b_i \cdot_{A} b_j \otimes a_j, a^* \otimes b^* \otimes c^* \rangle, \\
		&\langle T_{\sigma(r)}( \ad^*(T_{\sigma(r)}(b^*)) a^* ), c^*\rangle = \sum_{i, j}\langle (a_i \cdot_{A} b_j - b_j \cdot_{A} a_i) \otimes b_i \otimes a_j, a^* \otimes b^* \otimes c^*\rangle.
	\end{align*}
	Then
	\begin{align*}
		&\langle -T_{\sigma(r)}(a^*) \cdot_A T_{\sigma(r)}(b^*) + T_{\sigma(r)}(L^*(T_{r}(a^*)) b^* + \ad^*(T_{\sigma(r)}(b^*)) a^*), c^*\rangle \\
		= & \sum_{i,j}\langle -b_i \otimes a_i \cdot_{A} a_j \otimes b_j + a_i \otimes a_j \otimes b_i \cdot_{A} b_j + (a_i \cdot_{A} b_j - b_j \cdot_{A} a_i) \otimes a_j \otimes b_i, a^* \otimes c^* \otimes b^*\rangle \\
		= & \langle \sum_j( (\id \otimes R(a_j) - \ad(a_j) \otimes \id)(r - \sigma(r) ) ) \otimes b_j -[[r, r]], a^* \otimes c^* \otimes b^*\rangle.
	\end{align*}
	Therefore, if $r-\sigma(r)$ is $(R, \ad)$-invariant, then $r$ is a solution of the perm Yang-Baxter equation if and only if Eq.~\eqref{eq:pybeofs} holds.
\end{proof}

\begin{lemma}\label{lem:pcbd2c}
	Let $(A, \cdot_{A})$ be a perm algebra and $r \in A \otimes A$.
	Define a comultiplication $\Delta_r: A \to A \otimes A$ by Eq.~\eqref{eq:pcbd}.
	Then the multiplication $\cdot_r: A^* \otimes A^* \to A$ on $A^*$ obtained as the linear dual of $\Delta_r$ is explicitly given by
	\begin{equation}
		a^* \cdot_r b^* = L^*(T_{r}(a^*)) b^* + \ad^*(T_{\sigma(r)}(b^*)) a^*, \;\; \forall a^*, b^* \in A^*. \label{eq:pcbdr}
	\end{equation}
\end{lemma}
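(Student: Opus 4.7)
The plan is to unfold both sides of the defining duality $\langle a, a^{*} \cdot_{r} b^{*}\rangle = \langle \Delta_{r}(a), a^{*} \otimes b^{*}\rangle$ using the expansion $r = \sum_{i} a_{i} \otimes b_{i}$, and then repackage the resulting scalars through the maps $T_{r}$, $T_{\sigma(r)}$ together with the dual operators $L^{*}$ and $\ad^{*}$. First I would substitute $r$ into Eq.~\eqref{eq:pcbd} to obtain
\[
\Delta_{r}(a) \;=\; \sum_{i} a_{i} \otimes (b_{i} \cdot_{A} a) \;-\; \sum_{i} \ad(a)(a_{i}) \otimes b_{i},
\]
so that pairing with $a^{*} \otimes b^{*}$ breaks $\langle a, a^{*} \cdot_{r} b^{*}\rangle$ into two scalar sums, one involving $\langle a_i, a^{*}\rangle\langle b_{i} \cdot_{A} a, b^{*}\rangle$ and the other involving $\langle b_i, b^{*}\rangle\langle \ad(a)(a_{i}), a^{*}\rangle$.

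Next I would transpose the action of $A$ across the pairing in each summand separately. For the first sum the identity $\langle b_{i} \cdot_{A} a, b^{*}\rangle = \langle a, L^{*}(b_{i}) b^{*}\rangle$ lets me factor $a$ out of the bracket and recognise $\sum_{i}\langle a^{*}, a_{i}\rangle L^{*}(b_{i}) = L^{*}(T_{r}(a^{*}))$ applied to $b^{*}$. For the second sum I would use the sign flip $\ad(a)(a_{i}) = -\ad(a_{i})(a)$ followed by $\langle \ad(a_{i})(a), a^{*}\rangle = \langle a, \ad^{*}(a_{i}) a^{*}\rangle$; the remaining coefficients $\langle b^{*}, b_{i}\rangle$ then collect into $T_{\sigma(r)}(b^{*}) = \sum_{i} \langle b^{*}, b_{i}\rangle a_{i}$. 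It is precisely this reweighting, contracting $b^{*}$ with the second leg of $r$ rather than contracting $a^{*}$ with the first leg, that forces $T_{\sigma(r)}$ to appear in place of $T_{r}$.

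The calculation is bookkeeping rather than conceptual; the main things to watch are the sign coming from $\ad(a)(a_{i}) = -\ad(a_{i})(a)$ and the fact that the two summands contract $r$ with $a^{*}$ and $b^{*}$ in opposite legs, so one must carefully pair indices before invoking the definitions of $T_{r}$ and $T_{\sigma(r)}$. Once both pieces are assembled, the identity $\langle a, a^{*} \cdot_{r} b^{*}\rangle = \langle a, L^{*}(T_{r}(a^{*})) b^{*} + \ad^{*}(T_{\sigma(r)}(b^{*})) a^{*}\rangle$ holds for every $a \in A$, and non-degeneracy of the canonical pairing $A \times A^{*} \to \mathbb{K}$ yields Eq.~\eqref{eq:pcbdr}.
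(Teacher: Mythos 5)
Your proposal is correct and follows essentially the same computation as the paper: both arguments unfold the duality pairing $\langle a, a^*\cdot_r b^*\rangle=\langle\Delta_r(a),a^*\otimes b^*\rangle$ with $r=\sum_i a_i\otimes b_i$ and identify the two resulting sums as $\langle a, L^*(T_r(a^*))b^*\rangle$ and $\langle a,\ad^*(T_{\sigma(r)}(b^*))a^*\rangle$, the paper merely running the chain of equalities in the opposite direction (from the claimed formula back to $\Delta_r$). Your sign bookkeeping via $\ad(a)(a_i)=-\ad(a_i)(a)$ and the identification of $T_{\sigma(r)}(b^*)=\sum_i\langle b^*,b_i\rangle a_i$ are exactly right.
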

\begin{proof}
	Set $r=\sum_{i} a_i \otimes b_i$. 
	For all $a \in A$ and $a^*, b^* \in A^*$, we have
	\begin{align*}
		&\langle a, L^*(T_{r}(a^*)) b^* + \ad^*(T_{\sigma(r)}(b^*)) a^* \rangle \\
		&= \sum_{i} \langle a^*, a_i\rangle\langle a, L^*(b_i)b^*\rangle + \sum_{i} \langle b^*, b_i\rangle\langle a, \ad^*(a_i)a^*\rangle  \\
		&=\sum_{i} \langle a_i \otimes b_i \cdot_{A} a + (a_i \cdot_{A} a - a \cdot_A a_i)\otimes b_i, a^* \otimes b^*\rangle  =\langle \Delta_r(a), a^* \otimes b^*\rangle = \langle a, a^* \cdot_r b^*\rangle.
	\end{align*}
	Therefore, Eq.~\eqref{eq:pcbdr} holds.
\end{proof}

\begin{proposition}\label{prop:qtpb2h}
	Let $(A, \cdot_A, \Delta_r)$ be a quasi-triangular perm bialgebra.
	Then $T_{r},T_{\sigma(r)}: A^* \to A$ are both perm algebra homomorphism between $(A^*, \cdot_r)$ and $(A, \cdot_{A})$, where the multiplication $\cdot_r$ is the linear dual of $\Delta_r$.
\end{proposition}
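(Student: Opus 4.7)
The plan is to recognize that the statement is essentially a direct repackaging of Lemma~\ref{lem:pcbd2c} and Proposition~\ref{prop:pybeof}, so no new computation is needed. The key observation is that Lemma~\ref{lem:pcbd2c} rewrites $a^* \cdot_r b^*$ as exactly the argument appearing inside the $T_r$ (and $T_{\sigma(r)}$) on the right-hand sides of Eqs.~\eqref{eq:pybeof} and~\eqref{eq:pybeofs}.

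First I would use the defining hypotheses of a quasi-triangular perm bialgebra: $r$ solves the perm Yang-Baxter equation, and the skew-symmetric part $\tfrac{1}{2}(r - \sigma(r))$ is $(R, \ad)$-invariant. By Lemma~\ref{lem:pcbd2c}, the dual multiplication is
\[
a^* \cdot_r b^* \;=\; L^*(T_{r}(a^*))\, b^* + \ad^*(T_{\sigma(r)}(b^*))\, a^*, \qquad \forall\, a^*, b^* \in A^*.
\]
Applying $T_r$ to both sides and using Eq.~\eqref{eq:pybeof} from Proposition~\ref{prop:pybeof}, which is equivalent to $r$ being a solution of the perm Yang-Baxter equation, I obtain
\[
T_r(a^* \cdot_r b^*) \;=\; T_r(a^*) \cdot_{A} T_r(b^*),
\]
so $T_r: (A^*, \cdot_r) \to (A, \cdot_A)$ is a perm algebra homomorphism.

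For the second map I would apply $T_{\sigma(r)}$ to the same expression of $a^* \cdot_r b^*$ and invoke the second half of Proposition~\ref{prop:pybeof}: under the $(R, \ad)$-invariance of the skew-symmetric part of $r$, Eq.~\eqref{eq:pybeofs} is equivalent to the perm Yang-Baxter equation for $r$. This directly yields
\[
T_{\sigma(r)}(a^* \cdot_r b^*) \;=\; T_{\sigma(r)}(a^*) \cdot_{A} T_{\sigma(r)}(b^*),
\]
so $T_{\sigma(r)}$ is likewise a perm algebra homomorphism. There is no genuine obstacle here; the only subtle point worth emphasizing in the write-up is that the $(R, \ad)$-invariance hypothesis is used precisely to pass from $[[r,r]] = 0$ to Eq.~\eqref{eq:pybeofs} (it is not needed for Eq.~\eqref{eq:pybeof}), so both defining conditions of a quasi-triangular perm bialgebra enter exactly once, each responsible for one of the two homomorphism identities.
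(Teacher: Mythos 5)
Your proof is correct and follows essentially the same route as the paper: both combine Lemma~\ref{lem:pcbd2c} with the two halves of Proposition~\ref{prop:pybeof} to get the homomorphism identities for $T_r$ and $T_{\sigma(r)}$ respectively. Your added remark that the $(R,\ad)$-invariance is only needed for Eq.~\eqref{eq:pybeofs} is accurate and a nice clarification, though the paper does not spell it out.
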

\begin{proof}
	By hypothesis, $r$ is a solution of the perm Yang-Baxter equation in $(A, \cdot_{A})$ and its skew-symmetric part is $(R, \ad)$-invariant.
	Combining Proposition~\ref{prop:pybeof} and Lemma~\ref{lem:pcbd2c}, we obtain 
	\begin{equation*}
		T_{r}(a^*) \cdot_A T_{r}(b^*) = T_{r}(a^* \cdot_{r} b^*), \;
		T_{\sigma(r)}(a^*) \cdot_A T_{\sigma(r)}(b^*) = T_{\sigma(r)}(a^* \cdot_{r} b^*), \;\;
		\forall a^*, b^* \in A^*,
	\end{equation*}
	which implies both $T_{r}$ and $T_{\sigma(r)}$ are perm algebra homomorphisms.
\end{proof}

We conclude this section by establishing a characterization of the $(R, \ad)$-invariant condition.
\begin{lemma}\label{lem:radi}
	Let $(A, \cdot_A)$ be a perm algebra and $r \in A \otimes A$.
	Then $r$ is $(R, \ad)$-invariant if and only if 
	\begin{equation}
		T_{r}(\ad^*(a) a^*) -  T_{r}(a^*) \cdot_A a = 0, \;\; \forall a \in A, a^* \in A^*. \label{eq:adi} 
	\end{equation}
	If in addition $r$ is skew-symmetric, then the following conditions are equivalent.
	\begin{enumerate}
		\item 
		\label{it:ad1}
		$r$ is $(R, \ad)$-invariant.
		
		\item 
		\label{it:ad2}
		The following equation holds for all $a \in A$ and $a^* \in A^*$:
		\begin{equation}
			T_{r} (R^*(a)a^*) - \ad(a) T_{r}(a^*) = 0, \;\; \forall a \in A, \label{eq:sadi}
		\end{equation}
		
		\item 
		\label{it:ad3}
		The following equation holds for all $a \in A$ and $a^* \in A^*$:
		\begin{equation}
			L^*(T_{r}(a^*))b^* - \ad^*(T_{r}(b^*))a^* = 0. \label{eq:sadii}
		\end{equation}
	\end{enumerate}
\end{lemma}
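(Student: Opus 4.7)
The plan is to prove both assertions by unwinding each tensor equation through the canonical identification $\langle T_r(a^*), b^*\rangle = \langle r, a^*\otimes b^*\rangle$, and then reading off the equivalences from the duality relations between $L$, $R$, $\ad$ and $L^*$, $R^*$, $\ad^*$. Concretely, first I would record the three basic dualities I will use repeatedly: $\langle R^*(a) a^*, b\rangle = \langle a^*, b\cdot_A a\rangle$, $\langle L^*(a) a^*, b\rangle = \langle a^*, a\cdot_A b\rangle$, and $\langle \ad^*(a) a^*, b\rangle = \langle a^*, \ad(a) b\rangle$, so $\ad^*(a) = L^*(a) - R^*(a)$.

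For the first statement, I would pair the condition $(\id \otimes R(a) - \ad(a)\otimes \id)(r) = 0$ against an arbitrary $a^*\otimes b^* \in A^*\otimes A^*$. Writing $r = \sum_i a_i\otimes b_i$, the first summand gives $\sum_i \langle a^*, a_i\rangle \langle b^*, b_i\cdot_A a\rangle = \langle T_r(a^*)\cdot_A a, b^*\rangle$, while the second gives $\sum_i \langle \ad^*(a) a^*, a_i\rangle\langle b^*, b_i\rangle = \langle T_r(\ad^*(a)a^*), b^*\rangle$. Since $b^*$ is arbitrary, vanishing of the pairing for all $b^*$ is equivalent to Eq.~\eqref{eq:adi}, and vanishing for all $a,a^*$ is equivalent to $(R,\ad)$-invariance.

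For the skew-symmetric case, I would first note that applying $\sigma$ to $(\id\otimes R(a) - \ad(a)\otimes \id)(r) = 0$ and using $\sigma(r) = -r$ immediately converts it into the equivalent condition $(R(a)\otimes \id - \id\otimes \ad(a))(r) = 0$. Pairing this with $a^*\otimes b^*$ by the same method as above yields $\langle T_r(R^*(a)a^*), b^*\rangle = \langle T_r(a^*), \ad^*(a) b^*\rangle = \langle \ad(a)T_r(a^*), b^*\rangle$, which is precisely Eq.~\eqref{eq:sadi}; so \eqref{it:ad1}$\Leftrightarrow$\eqref{it:ad2}. For \eqref{it:ad3}, I would test $L^*(T_r(a^*))b^* - \ad^*(T_r(b^*))a^*$ against an arbitrary $c\in A$. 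The first term pairs to $\langle b^*, T_r(a^*)\cdot_A c\rangle = \langle r, a^*\otimes R^*(c)b^*\rangle$. For the second term, writing $T_r(b^*) = \sum_j \langle b^*, a_j\rangle b_j$ and using $\ad^*(c) = L^*(c) - R^*(c)$ gives $\langle a^*, \ad(b_j)c\rangle = -\langle \ad^*(c)a^*, b_j\rangle$, so the second term pairs to $-\langle r, b^*\otimes \ad^*(c)a^*\rangle$. Skew-symmetry $r = -\sigma(r)$ rewrites this as $\langle r, \ad^*(c)a^*\otimes b^*\rangle$, and the difference becomes $\langle (\id\otimes R(c) - \ad(c)\otimes\id)(r), a^*\otimes b^*\rangle$, giving \eqref{it:ad3}$\Leftrightarrow$\eqref{it:ad1}.

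There is no real obstacle beyond careful bookkeeping; the only slightly non-obvious step is the sign flip $\langle a^*, \ad(b_j) c\rangle = -\langle \ad^*(c) a^*, b_j\rangle$ used in the analysis of \eqref{it:ad3}, which relies on reading $\ad^*$ as acting on the first slot while $\ad$ is evaluated on the second, together with skew-symmetry of $r$ to align the two slots of the tensor. Once this identity is isolated, all three equivalences reduce to the same underlying tensor equation.
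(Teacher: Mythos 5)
Your proposal is correct and follows essentially the same route as the paper: pair each identity against test functionals, use the dualities between $L,R,\ad$ and $L^*,R^*,\ad^*$, and invoke skew-symmetry of $r$ to swap tensor slots. The only difference is that you write out the $(\ref{it:ad1})\Leftrightarrow(\ref{it:ad3})$ computation explicitly (including the sign flip $\langle a^*,\ad(b_j)c\rangle=-\langle\ad^*(c)a^*,b_j\rangle$), which the paper dismisses with ``similarly''; your details check out.
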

\begin{proof}
	For all $a \in A$ and $a^*, b^* \in A^*$, we have 
	\begin{align*}
		\langle T_{r}(\ad^*(a) a^*) -  T_{r}(a^*) \cdot_A a, b^*\rangle &= \langle r, \ad^*(a) a^* \otimes b^* - a^* \otimes R^*(a) b^* \rangle \\
		&= \langle (\ad(a) \otimes \id - \id \otimes R(a))r, a^* \otimes b^* \rangle,
	\end{align*}
	which shows that $r$ is $(R, \ad)$-invariant if and only if Eq.~\eqref{eq:adi} holds.
	
	Suppose that $r$ is skew-symmetric. 
	For all $a \in A$ and $a^*, b^* \in A^*$, we have
	\begin{align*}
		&\langle T_{r}(R^*(a)a^*) - \ad(a) T_{r}(a^*), b^*\rangle = \langle r, R^*(a)a^* \otimes b^* - a^* \otimes \ad^*(a)b^*\rangle \\
		&= \langle r, -b^* \otimes R^*(a)a^*  + \ad^*(a)b^* \otimes a^* \rangle = -\langle (\id \otimes R(a) - \ad(a) \otimes \id)(r), b^* \otimes a^*\rangle,
	\end{align*}
	which implies the equivalence between (\ref{it:ad1}) and (\ref{it:ad2}).
	Similarly, we show the equivalence between (\ref{it:ad1}) and (\ref{it:ad3}).
	The proof is completed.
\end{proof}
\begin{remark}\label{rmk:radi}
	Let $(A, \cdot_A)$ be a perm algebra, and $r \in A \otimes A$ be skew-symmetric and $(R, \ad)$-invariant.
	Combining Eqs.~\eqref{eq:adi} and \eqref{eq:sadi} in Lemma~\ref{lem:radi}, we have
	\begin{equation}
		T_{r}(L^*(a)a^*) - a \cdot_{A} T_{r}(a^*)= 0, \;\; \forall a \in A, \; a^* \in A^*. \label{eq:adil}
	\end{equation}
\end{remark}

\section{Factorizable perm bialgebras}\label{sec:fpb}
In this section, we introduce the notion of factorizable perm bialgebras as a subclass of quasi-triangular perm bialgebras, which leads to a factorization of the underlying perm algebra.
We show that the double of a perm bialgebra admits a canonical factorizable perm bialgebra structure.

\begin{definition}
	A quasi-triangular perm bialgebra $(A, \cdot_A, \Delta_r)$ is called \textbf{factorizable} if $T_{r-\sigma(r)}: A^* \to A$ is a linear isomorphism of vector spaces.
\end{definition}

\begin{remark}
	Let $(A, \cdot_A, \Delta_r)$ be a quasi-triangular perm bialgebra.
	Note that $T_{r - \sigma(r)} = T_r - T_{\sigma(r)}$.
	The key distinction between triangular and factorizable perm bialgebra structures lies in the properties of $T_{r - \sigma(r)}$.
	Explicitly, $T_{r - \sigma(r)} = 0$ implies symmetry of $r$ and thus a triangular perm bialgebra, while the nondegeneracy of $T_{r - \sigma(r)}$ defines a factorizable perm bialgebra.
\end{remark}

As a subclass of quasi-triangular perm bialgebras, factorizable perm bialgebras appear in pairs similarly.
\begin{corollary}\label{coro:rsrf}
	If $(A, \cdot_A, \Delta_r)$ is a factorizable perm bialgebra, then $(A, \cdot_A, \Delta_{\sigma(r)})$ is also a factorizable perm bialgebra
\end{corollary}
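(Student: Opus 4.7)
The plan is to combine Proposition~\ref{prop:rsrqt} with a direct calculation showing that the map $T_{\sigma(r)-\sigma(\sigma(r))}$ equals $-T_{r-\sigma(r)}$ up to sign.

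First, by Proposition~\ref{prop:rsrqt}, the fact that $(A,\cdot_A,\Delta_r)$ is quasi-triangular already guarantees that $\sigma(r)$ is a solution of the perm Yang-Baxter equation in $(A,\cdot_A)$ and that its skew-symmetric part (which coincides with that of $r$ up to sign) is $(R,\ad)$-invariant. Hence $(A,\cdot_A,\Delta_{\sigma(r)})$ is a quasi-triangular perm bialgebra, and it only remains to verify the nondegeneracy condition in the definition of factorizability, namely that $T_{\sigma(r)-\sigma(\sigma(r))}:A^*\to A$ is a linear isomorphism.

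The key observation is that $\sigma\circ\sigma=\id$ on $A\otimes A$, so $\sigma(\sigma(r))=r$, whence
\begin{equation*}
	\sigma(r)-\sigma(\sigma(r))=\sigma(r)-r=-(r-\sigma(r)).
\end{equation*}
Since $T_{(-)}$ is linear in its argument, this yields $T_{\sigma(r)-\sigma(\sigma(r))}=-T_{r-\sigma(r)}$. By hypothesis $T_{r-\sigma(r)}$ is a linear isomorphism, so its negative is too, and the conclusion follows.

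I do not anticipate any real obstacle: the statement is essentially a bookkeeping consequence of Proposition~\ref{prop:rsrqt} together with the involutivity of $\sigma$. The only minor point to be careful about is to make explicit that factorizability decomposes into the quasi-triangular condition plus nondegeneracy of $T_{r-\sigma(r)}$, so that both ingredients are verified for $\sigma(r)$ separately.
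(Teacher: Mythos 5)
Your proposal is correct and follows exactly the paper's own argument: quasi-triangularity of $(A,\cdot_A,\Delta_{\sigma(r)})$ comes from Proposition~\ref{prop:rsrqt}, and the identity $T_{\sigma(r)-\sigma(\sigma(r))}=-T_{r-\sigma(r)}$ gives the required nondegeneracy. Nothing further is needed.
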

\begin{proof}
	If $T_{r-\sigma(r)}$ is a linear isomorphism of vector spaces, then 
	\begin{equation*}
		T_{\sigma(r) - \sigma(\sigma(r))} = - T_{r - \sigma(r)}
	\end{equation*}
	is also a linear isomorphism of vector spaces.
	Therefore, the conclusion follows from Proposition~\ref{prop:rsrqt}.
\end{proof}

Let $(A, \cdot_{A})$ be a perm algebra and $r \in A \otimes A$.
Define a linear map $T_{r} \oplus T_{\sigma(r)}: A ^* \to A \oplus A$ by 
\begin{equation*}
	T_{r} \oplus T_{\sigma(r)}(a^*) = (T_{r}(a^*), T_{\sigma(r)}(a^*)), \;\; \forall a^* \in A^*.
\end{equation*}

The following result justifies the terminology of a factorizable perm bialgebra.
\begin{proposition}
	Let $(A, \cdot_A, \Delta_r)$ be a factorizable perm bialgebra.
	Then $\mathrm{Im}(T_{r} \oplus T_{\sigma(r)})$ is a perm subalgebra of the direct sum perm algebra $A \oplus A$, and is isomorphic to the perm algebra $(A^*, \cdot_r)$, where $\cdot_r: A^* \otimes A^* \to A^*$ is defined by Eq.~\eqref{eq:pcbdr}.
	Moreover, any $a \in A$ has a unique decomposition 
	$a = a_{1} - a_{2}$ with $(a_{1}, a_{2}) \in \mathrm{Im}(T_{r} \oplus T_{\sigma(r)})$.
\end{proposition}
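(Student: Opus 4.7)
The plan is to assemble the proof from three essentially independent observations, corresponding to the three claims in the statement: (i) $\mathrm{Im}(T_r \oplus T_{\sigma(r)})$ is closed under the componentwise product, (ii) the map $T_r \oplus T_{\sigma(r)}: A^* \to A \oplus A$ is injective, hence identifies $(A^*, \cdot_r)$ with its image, and (iii) the factorizability of $T_{r-\sigma(r)}$ delivers the unique decomposition.

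For (i), I would first observe that the direct sum perm algebra $(A \oplus A, \cdot_A \oplus \cdot_A)$ is a perm algebra under componentwise multiplication, and then invoke Proposition~\ref{prop:qtpb2h}, which tells us that both $T_r$ and $T_{\sigma(r)}$ are perm algebra homomorphisms from $(A^*, \cdot_r)$ to $(A, \cdot_A)$. Consequently, for any $a^*, b^* \in A^*$,
\begin{equation*}
	(T_r \oplus T_{\sigma(r)})(a^* \cdot_r b^*) = (T_r(a^*) \cdot_A T_r(b^*), \, T_{\sigma(r)}(a^*) \cdot_A T_{\sigma(r)}(b^*)) = (T_r \oplus T_{\sigma(r)})(a^*) \cdot (T_r \oplus T_{\sigma(r)})(b^*),
\end{equation*}
so $T_r \oplus T_{\sigma(r)}$ is a perm algebra homomorphism and its image is a perm subalgebra of $A \oplus A$.

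For (ii), injectivity is where the hypothesis of factorizability enters. If $(T_r \oplus T_{\sigma(r)})(a^*) = 0$, then $T_r(a^*) = T_{\sigma(r)}(a^*) = 0$, and subtracting gives $T_{r - \sigma(r)}(a^*) = 0$. Since $T_{r - \sigma(r)}$ is a linear isomorphism by factorizability, we conclude $a^* = 0$. Thus $T_r \oplus T_{\sigma(r)}$ is an injective perm algebra homomorphism, yielding an isomorphism of perm algebras $(A^*, \cdot_r) \cong \mathrm{Im}(T_r \oplus T_{\sigma(r)})$.

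For (iii), given $a \in A$, I would set $a^* := T_{r - \sigma(r)}^{-1}(a) \in A^*$ and define $a_1 := T_r(a^*)$, $a_2 := T_{\sigma(r)}(a^*)$. Then by construction $(a_1, a_2) \in \mathrm{Im}(T_r \oplus T_{\sigma(r)})$ and $a_1 - a_2 = T_{r - \sigma(r)}(a^*) = a$, giving existence. For uniqueness, if $a = a_1 - a_2 = b_1 - b_2$ with both pairs lying in $\mathrm{Im}(T_r \oplus T_{\sigma(r)})$, write $(a_1, a_2) = (T_r \oplus T_{\sigma(r)})(a^*)$ and $(b_1, b_2) = (T_r \oplus T_{\sigma(r)})(b^*)$. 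Then $T_{r - \sigma(r)}(a^*) = a = T_{r - \sigma(r)}(b^*)$, and the bijectivity of $T_{r - \sigma(r)}$ forces $a^* = b^*$, hence $(a_1, a_2) = (b_1, b_2)$. I do not anticipate a genuine obstacle here: the entire argument rests on bundling the two homomorphisms $T_r$ and $T_{\sigma(r)}$ together and exploiting the bijectivity of their difference; the most delicate check is merely that the direct-sum perm structure on $A \oplus A$ is compatible with the componentwise map, which is immediate.
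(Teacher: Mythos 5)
Your proof is correct and follows essentially the same route as the paper: both invoke Proposition~\ref{prop:qtpb2h} to make $T_r \oplus T_{\sigma(r)}$ a perm algebra homomorphism, derive injectivity from the nondegeneracy of $T_{r-\sigma(r)}$, and obtain the (unique) decomposition by applying $T_r$ and $T_{\sigma(r)}$ to $T_{r-\sigma(r)}^{-1}(a)$. Your write-up is just slightly more explicit about the uniqueness step.
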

\begin{proof}
	By Proposition~\ref{prop:qtpb2h}, $T_{r} \oplus T_{\sigma(r)}$ is a perm algebra homomorphism from $(A^*, \cdot_r)$ to the direct sum perm algebra $A \oplus A$.
	Since $T_{r - \sigma(r)}: A^* \to A$ is nondegenerate, $\ker(T_{r} \oplus T_{\sigma(r)}) = 0$.
	Thus, $\mathrm{Im}(T_{r} \oplus T_{\sigma(r)})$ is isomorphic to the perm algebra $(A^*, \cdot_r)$ as perm algebras.
	Moreover, for any $a \in A$, we have
	\begin{equation*}
		a = T_{r-\sigma(r)} T^{-1}_{r-\sigma(r)}(a) = T_{r}T^{-1}_{r-\sigma(r)}(a) - T_{\sigma(r)} T^{-1}_{r-\sigma(r)}(a) = a_1 - a_2,
	\end{equation*}
	where $(a_1, a_2) =(T_{r}T^{-1}_{r-\sigma(r)}(a), T_{\sigma(r)}T^{-1}_{r-\sigma(r)}(a))  \in \mathrm{Im}( T_{r} \oplus T_{\sigma(r)} )$.
	The uniqueness follows from the nondegeneracy of $T_{r-\sigma(r)}$ again.
\end{proof}

Let $((A, \cdot_A), (A^*, \cdot_{A^*}))$ be a perm bialgebra.
Define a multiplication on $\mathfrak{A} = A \oplus A^*$ by
\begin{equation*}
	(a+a^*) \cdot_\mathfrak{A} (b+b^*) = (a \cdot_A b + L_{\cdot_{A^*}}^*(a^*)b + \ad_{\cdot_{A^*}}^*(b^*)a) + (a^* \cdot_{A^*} b^* + L_{\cdot_{A}}^*(a)b^* + \ad_{\cdot_{A}}^*(b)a^*), 
\end{equation*}
where $a, b \in A, a^*, b^* \in A^*$.
Then $(\mathfrak{A}, \cdot_\mathfrak{A})$ is a perm algebra \cite{lin2025infinite}, called the \textbf{double perm algebra} of the perm bialgebra $((A, \cdot_A), (A^*, \cdot_{A^*}))$.

\begin{proposition}
	Let $(A, \cdot_{A}, \Delta_r)$ be a factorizable perm bialgebra.
	Then the double perm algebra $(\mathfrak{A}, \cdot_{\mathfrak{A}})$ is isomorphic to the direct sum perm algebra $A \oplus A$.
\end{proposition}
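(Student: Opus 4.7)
The plan is to construct the linear map
\[
\phi: \mathfrak{A} = A \oplus A^* \longrightarrow A \oplus A, \qquad \phi(a+a^*) = \bigl(a + T_r(a^*),\, a + T_{\sigma(r)}(a^*)\bigr),
\]
and to show that it is an isomorphism of perm algebras. Bijectivity of $\phi$ is immediate from the nondegeneracy of $T_{r-\sigma(r)} = T_r - T_{\sigma(r)}$: if $\phi(a+a^*)=0$, subtracting the two components yields $T_{r-\sigma(r)}(a^*)=0$, hence $a^*=0$ and therefore $a=0$; given $(x,y) \in A \oplus A$, setting $a^* := T_{r-\sigma(r)}^{-1}(x-y)$ and $a := x - T_r(a^*)$ produces a preimage.

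To verify multiplicativity, I will expand both sides of $\phi((a+a^*) \cdot_\mathfrak{A} (b+b^*)) = \phi(a+a^*) \cdot_{A \oplus A} \phi(b+b^*)$ using the definition of $\cdot_\mathfrak{A}$ and the componentwise product on $A \oplus A$, and apply Proposition~\ref{prop:qtpb2h} to replace $T_r(a^* \cdot_{A^*} b^*)$ by $T_r(a^*) \cdot_A T_r(b^*)$ and $T_{\sigma(r)}(a^* \cdot_{A^*} b^*)$ by $T_{\sigma(r)}(a^*) \cdot_A T_{\sigma(r)}(b^*)$. After cancellation of the shared $a \cdot_A b$, $T_r(a^*) \cdot_A T_r(b^*)$ and $T_{\sigma(r)}(a^*) \cdot_A T_{\sigma(r)}(b^*)$ contributions, the identity reduces to the four dual-pairing identities
\begin{align*}
L^*_{\cdot_{A^*}}(a^*)\, b + T_r\bigl(\ad^*_{\cdot_A}(b)\, a^*\bigr) &= T_r(a^*) \cdot_A b, \\
\ad^*_{\cdot_{A^*}}(b^*)\, a + T_r\bigl(L^*_{\cdot_A}(a)\, b^*\bigr) &= a \cdot_A T_r(b^*), \\
L^*_{\cdot_{A^*}}(a^*)\, b + T_{\sigma(r)}\bigl(\ad^*_{\cdot_A}(b)\, a^*\bigr) &= T_{\sigma(r)}(a^*) \cdot_A b, \\
\ad^*_{\cdot_{A^*}}(b^*)\, a + T_{\sigma(r)}\bigl(L^*_{\cdot_A}(a)\, b^*\bigr) &= a \cdot_A T_{\sigma(r)}(b^*),
\end{align*}
to be verified for all $a, b \in A$ and $a^*, b^* \in A^*$.

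Each identity is checked by pairing both sides with $c^* \in A^*$ and unfolding $\cdot_{A^*} = \cdot_r$ via Lemma~\ref{lem:pcbd2c}, giving $a^* \cdot_r c^* = L^*(T_r(a^*))c^* + \ad^*(T_{\sigma(r)}(c^*))a^*$. The first identity closes formally: the $\ad$-contribution from $L^*_{\cdot_{A^*}}(a^*)b$ cancels exactly with that of $\ad^*_{\cdot_A}(b)a^*$ transported through $T_r$. The other three identities leave a residue involving $T_{r-\sigma(r)}(c^*)$ or $\ad(T_{r-\sigma(r)}(c^*))$ that cannot be removed formally; here the $(R,\ad)$-invariance of $r-\sigma(r)$ is essential, entering through Lemma~\ref{lem:radi} combined with the skew-symmetry $\langle T_{r-\sigma(r)}(x^*), y^*\rangle = -\langle T_{r-\sigma(r)}(y^*), x^*\rangle$: the appropriate equivalent form of $(R,\ad)$-invariance supplied by Lemma~\ref{lem:radi} rewrites the residue as $\pm T_{r-\sigma(r)}(a^*) \cdot_A b = \pm(T_r - T_{\sigma(r)})(a^*) \cdot_A b$, precisely the term needed to interpolate between the $T_r$- and $T_{\sigma(r)}$-versions of the right-hand side. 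The main obstacle is the bookkeeping of these four identities: since $T_r$ and $T_{\sigma(r)}$ play asymmetric roles in the formula for $\cdot_r$, they cannot be related by a formal symmetry and must be handled individually; nonetheless, each ultimately reduces to a short pairing computation.
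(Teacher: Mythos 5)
Your proposal is correct and follows essentially the same route as the paper: the same isomorphism $\varphi(a+a^*)=(a+T_r(a^*),\,a+T_{\sigma(r)}(a^*))$, the same bijectivity argument from the nondegeneracy of $T_{r-\sigma(r)}$, and the same reduction (via Proposition~\ref{prop:qtpb2h}) to the cross-term identities, which the paper likewise verifies by dual-pairing computations using Eq.~\eqref{eq:rsrradi}. The only difference is presentational: the paper treats $\varphi|_A$ and $\varphi|_{A^*}$ as homomorphisms and then checks the two mixed products, whereas you list all four residual identities explicitly.
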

\begin{proof}
	Define a linear map $\varphi: A \oplus A^* \to A \oplus A$ by 
	\begin{equation*}
		\varphi(a) = (a, a), \; \varphi(a^*) = (T_{r}(a^*), T_{\sigma(r)}(a^*)), \;\; \forall a \in A, a^* \in A^*.
	\end{equation*}
	By the fact that $T_{r - \sigma(r)}$ is a linear isomorphism of vector spaces, we have $\ker(\varphi) = 0$ and $\varphi$ is a linear isomorphism of vector spaces.
	It is easy to show that both $\varphi|_A$ and $\varphi|_{A^*} = T_{r} \oplus T_{\sigma(r)}$ are homomorphisms of perm algebras by Proposition~\ref{prop:qtpb2h}. 
	Moreover, for all $a, b \in A$ and $a^*, b^* \in A^*$, we have
	\begin{align*}
		&\langle \ad_{\cdot_{r}}^*(b^*)a + T_{r}(L_{\cdot_{A}}^*(a)b^*), a^*\rangle = \langle a, b^* \cdot_{r} a^* - a^* \cdot_{r} b^*\rangle + \langle r,  L_{\cdot_{A}}^*(a)b^* \otimes a^* \rangle \\
		&= \langle \Delta_r(a), b^* \otimes a^* - a^* \otimes b^*\rangle + \langle (L_{\cdot_{A}}(a) \otimes \id)(r),  b^* \otimes a^* \rangle \\
		&= \langle ( R_{\cdot_{A}}(a) \otimes \id -  \id \otimes \ad_{\cdot_{A}}(a) + \id \otimes L_{\cdot_{A}}(a))(\sigma(r)) - ( \id \otimes R_{\cdot_{A}}(a) - \ad_{\cdot_{A}}(a) \otimes \id )(r) , a^* \otimes b^* \rangle \\
		&\overset{\eqref{eq:rsrradi}}{=} \langle ( L_{\cdot_{A}}(a) \otimes \id )(\sigma(r)), a^* \otimes b^*\rangle = \langle r, b^* \otimes L_{\cdot_{A}}^*(a)a^*\rangle = \langle a \cdot_{A} T_{r}(b^*), a^*\rangle,
	\end{align*}
	that is, 
	\begin{equation*}
		\ad_{\cdot_{r}}^*(b^*)a + T_{r}(L_{\cdot_{A}}^*(a)b^*) = a \cdot_{A} T_{r}(b^*).
	\end{equation*}
	Similarly, we have 
	\begin{equation*}
		\ad_{\cdot_{r}}^*(b^*)a + T_{\sigma(r)}(L_{\cdot_{A}}^*(a)b^*) = a \cdot_{A} T_{\sigma(r)}(b^*).
	\end{equation*}
	Thus, we have
	\begin{align*}
		&\varphi(a \cdot_{\mathfrak{A}} b^*) = \varphi(\ad_{\cdot_{r}}^*(b^*)a + L_{\cdot_{A}}^*(a)b^*) =(\ad_{\cdot_{r}}^*(b^*)a + T_{r}(L_{\cdot_{A}}^*(a)b^*), \ad_{\cdot_{r}}^*(b^*)a + T_{\sigma(r)}(L_{\cdot_{A}}^*(a)b^*)) \\
		&=(a \cdot_{A} T_{r}(b^*), a \cdot_{A} T_{\sigma(r)}(b^*)) = (a, a) \cdot (T_{r}(b^*), T_{\sigma(r)}(b^*)) = \varphi(a) \cdot \varphi(b^*).
	\end{align*}
	By a similar argument, we show that 
	\begin{equation*}
		\varphi(b^* \cdot_{\mathfrak{A}} a) = \varphi(b^*) \cdot \varphi(a).
	\end{equation*} 
	In conclusion, $\varphi: \mathfrak{A} \to A \oplus A$ is an isomorphism of perm algebras.
	The proof is completed.
\end{proof}

The following theorem establishes the existence of a canonical factorizable perm bialgebra structure on the double perm algebra of a perm bialgebra.
\begin{theorem}
	Let $((A, \cdot_A), (A^*, \cdot_{A^*}))$ be a perm bialgebra.
	Let $\{e_1, e_2, \cdots, e_n\}$ be a basis of $A$ and $\{e_1^*, e_2^*, \cdots, e_n^*\}$ be the dual basis of $A^*$.
	Set 
	\begin{equation*}
		r = \sum_{i}e_i \otimes e_i^* \in A \otimes A^* \subset \mathfrak{A} \otimes \mathfrak{A}.
	\end{equation*}
	Then $(\mathfrak{A}, \cdot_\mathfrak{A}, \Delta_r)$ is a factorizable perm bialgebra, where $\Delta_r$ is defined by Eq.~\eqref{eq:pcbd} via $r$.
\end{theorem}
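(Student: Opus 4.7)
The plan is to verify in turn the three properties that define a factorizable perm bialgebra: that $r$ is a solution of the perm Yang-Baxter equation in $(\mathfrak{A}, \cdot_{\mathfrak{A}})$, that the skew-symmetric part $\frac{1}{2}(r-\sigma(r))$ is $(R, \ad)$-invariant, and that $T_{r - \sigma(r)}: \mathfrak{A}^* \to \mathfrak{A}$ is a linear isomorphism. The starting point is the canonical identification $\mathfrak{A}^* \cong A^* \oplus A^{**} \cong A^* \oplus A$, under which every $\xi \in \mathfrak{A}^*$ is uniquely written as $\xi = y^* + \widetilde{y}$ with $y^* \in A^*$ and $\widetilde{y} \in A^{**}$ corresponding to $y \in A$. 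A direct computation from the definition of $T_r$ and the canonical pairing between $\mathfrak{A}$ and $\mathfrak{A}^*$ yields
\begin{equation*}
T_r(\xi) = y^* \in A^* \subset \mathfrak{A}, \qquad T_{\sigma(r)}(\xi) = y \in A \subset \mathfrak{A},
\end{equation*}
so that $T_{r-\sigma(r)}(\xi) = y^* - y$ is a linear isomorphism $\mathfrak{A}^* \to \mathfrak{A}$. This disposes of the factorizability condition once the quasi-triangular structure is in place.

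To verify the $(R, \ad)$-invariance of $r - \sigma(r)$, I would appeal to Lemma~\ref{lem:radi}, which reduces the condition to
\begin{equation*}
T_{r-\sigma(r)}(\ad^*(x)\xi) = T_{r-\sigma(r)}(\xi) \cdot_{\mathfrak{A}} x, \qquad \forall x \in \mathfrak{A},\ \xi \in \mathfrak{A}^*.
\end{equation*}
Using the explicit descriptions of $T_r$ and $T_{\sigma(r)}$ just obtained, both sides split into an $A^*$- and an $A$-component, which can be checked separately. Decomposing $x = a + a^* \in A \oplus A^*$ and $\xi = y^* + \widetilde{y}$, and expanding using the explicit formula for $\cdot_{\mathfrak{A}}$ and the dual actions $L^*, R^*, \ad^*$ on $\mathfrak{A}$, the identity reduces in each case to one of the defining compatibility relations of the perm bialgebra $((A, \cdot_A), (A^*, \cdot_{A^*}))$ (or its dual obtained by interchanging the roles of $A$ and $A^*$).

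For the perm Yang-Baxter equation, I would invoke the operator form from Proposition~\ref{prop:pybeof}: it suffices to show
\begin{equation*}
T_r(\xi) \cdot_{\mathfrak{A}} T_r(\eta) = T_r\bigl(L^*(T_r(\xi))\eta + \ad^*(T_{\sigma(r)}(\eta))\xi\bigr), \qquad \forall \xi, \eta \in \mathfrak{A}^*.
\end{equation*}
Writing $\xi = y^* + \widetilde{y}$ and $\eta = z^* + \widetilde{z}$, the left-hand side collapses to $y^* \cdot_{A^*} z^*$, since the restriction of $\cdot_{\mathfrak{A}}$ to $A^* \times A^*$ coincides with $\cdot_{A^*}$. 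On the right-hand side, $T_r$ extracts the $A^*$-component of its argument; expanding $L^*(y^*)\eta + \ad^*(z)\xi$ via the explicit formulae for the dual actions in $\mathfrak{A}$ and reading off the $A^*$-part, one reduces the desired equality to the perm bialgebra compatibility equations together with the perm algebra axioms of $(A^*, \cdot_{A^*})$. Combined with Theorem~\ref{thm:cbd}, the three verifications together establish that $(\mathfrak{A}, \cdot_{\mathfrak{A}}, \Delta_r)$ is a factorizable perm bialgebra.

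The main obstacle is the bookkeeping required in the second and third paragraphs: each verification splits into several subcases according to whether the arguments lie in $A$ or $A^*$, and each subcase invokes a specific piece of the bialgebra compatibility. No new structural identity beyond the definition of a perm bialgebra is needed, so once the subcases are carefully enumerated and matched with the corresponding compatibility equation (or its dual), the proof becomes mechanical.
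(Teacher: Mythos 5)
Your proposal is correct and would go through, but it takes a somewhat different route from the paper for two of the three verifications. The paper establishes the $(R,\ad)$-invariance of $r-\sigma(r)$ and the perm Yang--Baxter equation by direct tensor computations with $\sum_i e_i\otimes e_i^*$, closing each one with dual-basis identities such as $\sum_i e_i\otimes L^*_{\cdot_{A^*}}(e_i^*)a=\sum_i R^*_{\cdot_{A^*}}(e_i^*)a\otimes e_i$; you instead pass to the operator forms (Lemma~\ref{lem:radi} and Proposition~\ref{prop:pybeof}) and exploit the explicit projections $T_r(y^*+\widetilde{y})=y^*$, $T_{\sigma(r)}(y^*+\widetilde{y})=y$, which the paper only uses for the factorizability step. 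Your route is arguably cleaner: for instance, in the Yang--Baxter verification the $A^*$-component of $L^*_{\cdot_{\mathfrak{A}}}(y^*)\eta$ is $y^*\cdot_{A^*}z^*-\ad^*_{\cdot_A}(z)y^*$ and that of $\ad^*_{\cdot_{\mathfrak{A}}}(z)\xi$ is $\ad^*_{\cdot_A}(z)y^*$, so the right-hand side collapses to $y^*\cdot_{A^*}z^*$ almost immediately. One small inaccuracy in your forecast: these case-checks do not reduce to the perm bialgebra compatibility equations as you predict, but to pure duality identities between $\cdot_A$, $\cdot_{A^*}$ and their dual actions; the bialgebra compatibility enters only indirectly, through the fact (quoted from \cite{lin2025infinite}) that $(\mathfrak{A},\cdot_{\mathfrak{A}})$ is a perm algebra, which is what licenses the use of Lemma~\ref{lem:radi}, Proposition~\ref{prop:pybeof} and Theorem~\ref{thm:cbd} in the first place. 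This does not affect the validity of the argument, only the description of where the computation bottoms out.
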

\begin{proof}
	For all $a \in A$ and $a^* \in A^*$, noting that 
	\begin{align*}
		&\sum_{i} e_i \otimes L^*_{\cdot_{A^*}}(e_i^*)a = \sum_{i} R^*_{\cdot_{A^*}}(e_i^*)a \otimes e_i, \;\; \sum_{i} e_i \otimes \ad^*_{\cdot_{A}}(a)e_i^* = \sum_{i} \ad_{\cdot_{A}}(a)e_i \otimes e_i^*, \\
		&\sum_{i} R^*_{\cdot_{A}}(a)e_i^* \otimes e_i = \sum_{i} e_i^* \otimes R_{\cdot_{A}}(a)e_i, 
	\end{align*}
	we have
	\begin{align*}
		&(\id \otimes R_{\cdot_{\mathfrak{A}}} (a) - \ad_{\cdot_{\mathfrak{A}}}(a) \otimes \id)(r- \sigma(r)) \\
		&=\sum_{i}( e_i \otimes e^*_i \cdot_{\mathfrak{A}} a - e_i^* \otimes e_i \cdot_{\mathfrak{A}} a - (a \cdot_{\mathfrak{A}} e_i - e_i \cdot_{\mathfrak{A}} a) \otimes e_i^* + (a \cdot_{\mathfrak{A}} e_i^* - e_i^* \cdot_{\mathfrak{A}} a) \otimes e_i )\\
		&=\sum_{i}( e_i \otimes ( L^*_{\cdot_{A^*}}(e_i^*) a + \ad_{\cdot_{A}}^*(a)e_i^* ) - e_i^* \otimes e_i \cdot_{A} a - \ad(a)e_i \otimes e_i^* \\
		&\quad + (L_{\cdot_{A}}^*(a)e_i^*+\ad^*_{\cdot_{A^*}}(e_i^*)a - L^*_{\cdot_{A^*}}(e_i^*)a - \ad_{\cdot_{A}}^*(a)e_i^*) \otimes e_i )\\
		&=\sum_{i} ( e_i \otimes ( L^*_{\cdot_{A^*}}(e_i^*) a + \ad_{\cdot_{A}}^*(a)e_i^* ) - e_i^* \otimes e_i \cdot_{A} a - \ad(a)e_i \otimes e_i^* + (R_{\cdot_{A}}^*(a)e_i^* - R^*_{\cdot_{A^*}}(e_i^*)a ) \otimes e_i ) \\
		&= 0.
	\end{align*}
	Similarly, we show that 
	\begin{equation*}
		(\id \otimes R_{\cdot_{\mathfrak{A}}} (a^*) - \ad_{\cdot_{\mathfrak{A}}}(a^*) \otimes \id)(r- \sigma(r))  = 0.
	\end{equation*}
	Hence, the skew-symmetric part of $r$ is $(R_{\cdot_\mathfrak{A}}, \ad_{\cdot_\mathfrak{A}})$-invariant.
	
	Next, we prove that $r$ satisfies the perm Yang-Baxter equation in $(\mathfrak{A}, \cdot_\mathfrak{A})$. In fact, 
	\begin{align*}
		&[[r, r]] = \sum_{i, j} ( e_i \otimes e_i^* \cdot_{\mathfrak{A}} e_j \otimes e_j^* - e_i \otimes e_j \otimes e_i^* \cdot_{\mathfrak{A}} e_j^* + e_i \cdot_{\mathfrak{A}} e_j \otimes e_i^* \otimes e_j^* -  e_j \cdot_{\mathfrak{A}} e_i \otimes e_i^* \otimes e_j^* ) \\
		&= \sum_{i, j}( e_i \otimes (L^*_{\cdot_{A^*}}(e_i^*)e_j + \ad^*_{\cdot_{A}}(e_j) e_i^*) \otimes e_j^* - e_i \otimes e_j \otimes e_i^* \cdot_{A^*} e_j^* + e_i \cdot_{A} e_j \otimes e_i^* \otimes e_j^* -  e_j \cdot_{A} e_i \otimes e_i^* \otimes e_j^* ) \\
		&= \sum_{i, j}( e_i \otimes (L^*_{\cdot_{A^*}}(e_i^*)e_j + \ad^*_{\cdot_{A}}(e_j) e_i^*) \otimes e_j^* - e_i \otimes e_j \otimes L_{\cdot_{A^*}}(e_i^*) e_j^* - \ad_{\cdot_{A}}(e_j)e_i  \otimes e_i^* \otimes e_j^* ) \\
		&= 0,
	\end{align*}
	that is, $r$ is a solution of the perm Yang-Baxter equation in $(\mathfrak{A}, \cdot_{\mathfrak{A}})$.
	Therefore, $(\mathfrak{A}, \cdot_{\mathfrak{A}}, \Delta_r)$ is a quasi-triangular perm bialgebra.

	Finally, we have to show that $T_{r - \sigma(r)}$ is a linear isomorphism of vector spaces.
	Note that $T_{r}, T_{\sigma(r)}: \mathfrak{A}^* \to \mathfrak{A}$ are given by
	\begin{equation*}
		T_{r}(a^*+a) = a^*, \;
		T_{\sigma(r)}(a^*+a) = a, \;\;
		\forall a \in A, a^* \in A^*.
	\end{equation*}
	Thus, we have $T_{r - \sigma(r)}(a^* + a) = a^* - a$, which implies that the linear map $T_{r - \sigma(r)}: \mathfrak{A}^* \to \mathfrak{A}$ is a linear isomorphism of vector spaces.
	Therefore, the perm bialgebra $(\mathfrak{A}, \cdot_\mathfrak{A}, \Delta_r)$ is factorizable.
\end{proof}
\section{Relative Rota-Baxter operators and the perm Yang-Baxter equation}\label{sec:rboqrpa}
In this section, we further study operator forms of solutions of the perm Yang-Baxter equation in perm algebras under the $(R, \ad)$-invariant condition.
We show that relative Rota-Baxter operators of weights on perm algebras could be used to characterize solutions of the perm Yang-Baxter equation whose skew-symmetric parts are $(R, \ad)$-invariant.
Furthermore, for quadratic perm algebras, we demonstrate that these operators are in one-to-one correspondence with linear transformations fulfilling a Rota-Baxter-type identity (see Eq.~\eqref{eq:prbw}).

\begin{definition}
	A \textbf{representation} of a perm algebra $(A, \cdot_A)$ is a triple $(V, l, r)$, where $V$ is a vector space and $l, r: A \to \End(V)$ are linear maps satisfying
	\begin{eqnarray}
		&& l(a \cdot_A b) = l(a) l(b) = l(b) l(a), \label{eq:prep1}\\
		&& r(a \cdot_{A} b) = r(b) r(a) = r(b) l(a) = l(a) r(b), \;\; \forall a, b \in A. \label{eq:prep2}
	\end{eqnarray}
\end{definition}

Let $(A, \cdot_A)$ be a perm algebra, $V$ be a vector space and $l, r: A \to \End(V)$ be linear maps.
Define a multiplication $\cdot: (A \oplus V) \otimes (A \oplus V) \to A \oplus V$ on $A \oplus V$ by
\begin{equation*}
	(a_1 + v_1) \cdot (a_2 + v_2) := a_1 \cdot_{A} a_2 + (l(a_1) v_2 + r(a_2) v_1), \;\; \forall a_1, a_2 \in A, \; v_1, v_2 \in V.
\end{equation*}
Then $(V, l, r)$ is a representation of $(A, \cdot_A)$ if and only if $(A \oplus V, \cdot)$ is a perm algebra.
In such a case, the resulting perm algebra $(A \oplus V, \cdot)$ is called the \textbf{semi-direct product perm algebra by $(A, \cdot_{A})$ and $(V, l, r)$} and denoted by $(A \ltimes_{l, r} V, \cdot)$.

\begin{example}
	Let $(A, \cdot_{A})$ be a perm algebra.
	Then $(A, L, R)$ is a representation of $(A, \cdot_{A})$.
\end{example}

\begin{proposition}\label{prop:cadjr}
	{\rm (\cite{lin2025infinite})}
	Let $(A, \cdot_A)$ be a perm algebra and $(V, l, r)$ be a representation of $(A, \cdot_A)$.
	Then $(V^*, l^*, l^* - r^*)$ is a representation of $(A, \cdot_A)$.
	In particular, $(A^*, L^*, \ad^*)$ is a representation of $(A, \cdot_A)$.
\end{proposition}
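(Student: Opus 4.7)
The plan is to verify the representation axioms~\eqref{eq:prep1} and~\eqref{eq:prep2} for the pair $(l^*, l^* - r^*)$ by dualizing the corresponding axioms for $(l, r)$. The only subtlety is that passing to duals reverses composition, $(fg)^* = g^* f^*$, so each equality in the original representation is converted into a single dual equality with the factors in the opposite order. In particular, from Eq.~\eqref{eq:prep2} one obtains the three companion identities
\begin{equation*}
	r^*(a \cdot_A b) = r^*(a) r^*(b) = l^*(a) r^*(b) = r^*(b) l^*(a),
\end{equation*}
which will serve as the main computational ingredient.

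First I would handle Eq.~\eqref{eq:prep1} for $l^*$. Dualizing the two equalities $l(a \cdot_A b) = l(a) l(b) = l(b) l(a)$ immediately gives $l^*(a \cdot_A b) = l^*(b) l^*(a) = l^*(a) l^*(b)$, which is exactly Eq.~\eqref{eq:prep1} for $l^*$. Next, for the operator $l^* - r^*$ I would verify the three identities in Eq.~\eqref{eq:prep2} separately. The two simplest ones,
\begin{equation*}
	(l^* - r^*)(a \cdot_A b) = l^*(a)(l^* - r^*)(b) = (l^* - r^*)(b) l^*(a),
\end{equation*}
follow by expanding the right-hand sides and applying the identities $l^*(a) l^*(b) = l^*(b) l^*(a) = l^*(a \cdot_A b)$ together with $l^*(a) r^*(b) = r^*(b) l^*(a) = r^*(a \cdot_A b)$.

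The remaining identity $(l^* - r^*)(a \cdot_A b) = (l^* - r^*)(b)(l^* - r^*)(a)$ requires expanding the composition into four terms,
\begin{equation*}
	l^*(b) l^*(a) - l^*(b) r^*(a) - r^*(b) l^*(a) + r^*(b) r^*(a).
\end{equation*}
Using the dual identities above, the first and third terms combine to $l^*(a \cdot_A b) - r^*(a \cdot_A b) = (l^* - r^*)(a \cdot_A b)$, while the second and fourth terms both equal $r^*(b \cdot_A a)$ and cancel. There is no real obstacle here; the only care needed is to track which particular form of each defining equality is being dualized, since different forms give different placements of $l^*$ and $r^*$.

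Finally, the "in particular" statement follows by specializing to $(V, l, r) = (A, L, R)$, which is a representation of $(A, \cdot_A)$ directly from the defining axioms of a perm algebra, and observing that $\ad = L - R$ by definition, so $\ad^* = L^* - R^*$.
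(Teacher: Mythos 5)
Your proof is correct: the dual identities $r^*(a\cdot_A b)=r^*(a)r^*(b)=l^*(a)r^*(b)=r^*(b)l^*(a)$ and $l^*(a\cdot_A b)=l^*(a)l^*(b)=l^*(b)l^*(a)$ are exactly what is needed, and your four-term expansion of $(l^*-r^*)(b)(l^*-r^*)(a)$ with the cancellation of the two $r^*(b\cdot_A a)$ terms is the standard verification. The paper itself gives no proof (it cites \cite{lin2025infinite}), and your direct dualization argument is precisely the expected one.
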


Inspired by the notion of $\mathfrak{g}$-Lie algebras \cite{bai2010nonabelian, knapp2002lie}, we introduce the notion of $A$-perm algebras.
\begin{definition}
	Let $(A, \cdot_{A})$ and $(V, \cdot_{V})$ be perm algebras, and $l, r: A \to \End(V)$ be linear maps.
	If $(V,l,r)$ is a representation of $(A, \cdot_{A})$ and the following equations hold:
	\begin{align}
		&r(a)(u \cdot_{V} w) = r(a)(w \cdot_{V} u) = u \cdot_{V} (r(a)w), \label{eq:apar} \\
		&(l(a)u) \cdot_{V} w = (r(a)u) \cdot_{V} w = l(a)(u \cdot_{V} w) = u \cdot_{V} (l(a)w), \;\; \forall a \in A, \; u, w \in V, \label{eq:apal} 
	\end{align}
	then the quadruple $(V,l,r,\cdot_{V})$ is called an \textbf{$A$-perm algebra}.
\end{definition}

\begin{example}
	Let $(A, \cdot_{A})$ be a perm algebra.
	Then $(A, L, R, \cdot_{A})$ is an $A$-perm algebra.
\end{example}

Similar to representations of perm algebras, an $A$-perm algebra admits an equivalent characterization in terms of the perm algebra structure on the direct sum $A \oplus V$ of vector spaces, as stated in the following proposition.
\begin{proposition}
	Let $(A, \cdot_{A})$ and $(V, \cdot_{V})$ be perm algebras, and $l, r: A \to \End(V)$ be linear maps.
	Then $(V,l,r,\cdot_{V})$ is an $A$-perm algebra if and only if $(A \oplus V, \cdot)$ is a perm algebra, where the multiplication $\cdot$ on $A \oplus V$ is defined by 
	\begin{equation*}
		(a+u) \cdot (b+v) := [a, b] + l(a)v + r(b)u + [u, v], \;\; \forall a, b \in A, \; u, v \in V.
	\end{equation*}
\end{proposition}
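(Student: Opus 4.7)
The plan is a direct verification: I would expand both sides of the perm algebra axiom
\begin{equation*}
x \cdot (y \cdot z) = (x \cdot y) \cdot z = (y \cdot x) \cdot z
\end{equation*}
for $x = a + u$, $y = b + v$, $z = c + w$ in $A \oplus V$, and show that the resulting identity decomposes into exactly the four asserted conditions: $(A, \cdot_A)$ and $(V, \cdot_V)$ are perm algebras, $(V, l, r)$ is a representation of $(A, \cdot_A)$, and the compatibility equations \eqref{eq:apar}--\eqref{eq:apal} hold. After expansion, I would split the identity into its $A$-component and its $V$-component. The $A$-component depends only on $a, b, c$ and is equivalent to the perm axiom on $(A, \cdot_A)$. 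The $V$-component, being multilinear in $u, v, w$, I would organize by how many of the three $V$-slots are nonzero.

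The three sub-cases with exactly one $V$-slot nonzero recover the representation axioms: sole $w$ yields $l(a \cdot_A b) = l(a) l(b) = l(b) l(a)$, while sole $u$ and sole $v$ together produce the four equalities in \eqref{eq:prep2}. The three sub-cases with exactly two $V$-slots nonzero supply the compatibility conditions: taking $w = 0$ produces $r(c)(u \cdot_V v) = u \cdot_V (r(c) v)$ from $(xy)z = x(yz)$ and $r(c)(u \cdot_V v) = r(c)(v \cdot_V u)$ from $(xy)z = (yx)z$, which together form \eqref{eq:apar}; taking $v = 0$ gives $(r(b)u) \cdot_V w = u \cdot_V (l(b)w) = (l(b)u) \cdot_V w$, and taking $u = 0$ gives $(l(a)v) \cdot_V w = l(a)(v \cdot_V w) = (r(a)v) \cdot_V w$, which together assemble \eqref{eq:apal}. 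Finally, the sub-case with all three $V$-slots nonzero reduces, after cancelling contributions already accounted for by \eqref{eq:apar}--\eqref{eq:apal}, to the perm axiom for $(V, \cdot_V)$. Both directions of the biconditional follow at once from this accounting.

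The main obstacle is combinatorial rather than conceptual: three perm identities times eight sub-cases for the $V$-slot pattern produces a sizeable bookkeeping table, and care is needed to confirm that no extraneous identity arises beyond the four stated ones. In particular, the slightly redundant-looking middle equality $(l(a) u) \cdot_V w = (r(a) u) \cdot_V w$ in \eqref{eq:apal}, which might appear surprising, emerges precisely from comparing the $V$-components of $(x \cdot y) \cdot z$ and $(y \cdot x) \cdot z$ when the $A$-factor switches position in the two-$V$-slot sub-cases. Once this is recognized the remainder is routine expansion.
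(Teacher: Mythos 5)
Your proposal is correct and is precisely the "straightforward computation" that the paper's proof leaves to the reader: expand the three-fold products, split by the multidegree in the $V$-slots, and match each homogeneous component to one of the stated axioms. I verified the case accounting (in particular that the two-$V$-slot sub-cases assemble exactly Eqs.~\eqref{eq:apar}--\eqref{eq:apal} and that the trilinear part is the perm axiom on $(V,\cdot_V)$ with no cross terms), and it checks out.
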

\begin{proof}
	It follows from a straightforward computation.
\end{proof}

\begin{proposition}\label{prop:r2apa}
	Let $(A, \cdot_{A})$ be a perm algebra, and $r \in A \otimes A$ be skew-symmetric and $(R, \ad)$-invariant.
	Define a multiplication $\diamond_r: A^* \otimes A^* \to A^*$ on $A^*$ by
	\begin{equation*}
		a^* \diamond_r b^* = L^*(T_r(a^*))b^*, \;\; \forall a^*, b^* \in A^*.
	\end{equation*}
	Then $(A^*, L^*, \ad^*, \diamond_r)$ is an $A$-perm algebra.
\end{proposition}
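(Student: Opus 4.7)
The plan is to verify that $(A^*, L^*, \ad^*, \diamond_r)$ satisfies the three defining conditions of an $A$-perm algebra: that $(A^*, \diamond_r)$ is a perm algebra, that $(A^*, L^*, \ad^*)$ is a representation of $(A, \cdot_A)$, and the compatibility conditions \eqref{eq:apar} and \eqref{eq:apal}. The representation part is Proposition~\ref{prop:cadjr}, so the work concentrates on the other three points. The key inputs are Eqs.~\eqref{eq:adi}, \eqref{eq:sadii} and \eqref{eq:adil} from Lemma~\ref{lem:radi} and Remark~\ref{rmk:radi} (all of which use both skew-symmetry and $(R,\ad)$-invariance of $r$), together with the representation identities \eqref{eq:prep1} and \eqref{eq:prep2}.

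First I would observe that Eq.~\eqref{eq:adil} forces $T_r$ to be a homomorphism from $(A^*, \diamond_r)$ to $(A, \cdot_A)$, namely $T_r(u \diamond_r w) = T_r(L^*(T_r(u)) w) = T_r(u) \cdot_A T_r(w)$. Combined with $L^*(a \cdot_A b) = L^*(a) L^*(b) = L^*(b) L^*(a)$ from Eq.~\eqref{eq:prep1}, which in particular gives $L^*(a b) = L^*(b a)$, each of $u \diamond_r (w \diamond_r v)$, $(u \diamond_r w) \diamond_r v$ and $(w \diamond_r u) \diamond_r v$ collapses to $L^*(T_r(u) \cdot_A T_r(w)) v$. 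This establishes that $(A^*, \diamond_r)$ is a perm algebra. Next, for the compatibility \eqref{eq:apal}, Eq.~\eqref{eq:adil} gives $T_r(L^*(a) u) = a \cdot_A T_r(u)$ while Eq.~\eqref{eq:adi} gives $T_r(\ad^*(a) u) = T_r(u) \cdot_A a$. Using $L^*(a b) = L^*(b a)$ once more, each of $(L^*(a) u) \diamond_r w$, $(\ad^*(a) u) \diamond_r w$, $L^*(a)(u \diamond_r w)$ and $u \diamond_r (L^*(a) w)$ collapses to $L^*(a \cdot_A T_r(u)) w$, confirming \eqref{eq:apal}.

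The compatibility \eqref{eq:apar} is the main obstacle. The equality $\ad^*(a)(u \diamond_r w) = u \diamond_r (\ad^*(a) w)$ is immediate from the commutation $\ad^*(a) L^*(b) = L^*(b) \ad^*(a) = \ad^*(b \cdot_A a)$, a consequence of \eqref{eq:prep2}. The subtle equality is $\ad^*(a)(u \diamond_r w) = \ad^*(a)(w \diamond_r u)$. To verify it, I rewrite, by Eq.~\eqref{eq:sadii}, $u \diamond_r w = L^*(T_r(u)) w = \ad^*(T_r(w)) u$, so that
\begin{equation*}
u \diamond_r w - w \diamond_r u = \ad^*(T_r(w)) u - L^*(T_r(w)) u = -R^*(T_r(w)) u.
\end{equation*}
It then suffices to show $\ad^*(a) R^*(x) = 0$ for all $a, x \in A$. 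This is the dual of the universal perm-algebra identity $R(x) \ad(a) = 0$, which follows immediately from the left-commutative perm axiom: $R(x)\ad(a) b = (a \cdot_A b - b \cdot_A a) \cdot_A x = 0$ for all $a, b, x \in A$.

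I expect the main obstacle to be identifying the reduction of $u \diamond_r w - w \diamond_r u$ to $-R^*(T_r(w)) u$ via Eq.~\eqref{eq:sadii}, and then annihilating it using the universal identity $\ad^*(a) R^*(x) = 0$. This step is where the perm-algebra structure is genuinely used, beyond the formal representation-theoretic manipulations of the previous steps.
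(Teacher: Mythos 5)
Your proof is correct and follows essentially the same route as the paper: the perm-algebra axiom for $\diamond_r$ via Eq.~\eqref{eq:adil} and the representation identity \eqref{eq:prep1}, and the compatibilities \eqref{eq:apar}--\eqref{eq:apal} via Eqs.~\eqref{eq:adi} and \eqref{eq:sadii}. The only cosmetic difference is that you verify \eqref{eq:apar} at the operator level, reducing $u \diamond_r w - w \diamond_r u$ to $-R^*(T_r(w))u$ and killing it with the universal identity $\ad^*(a)R^*(x)=0$, which makes explicit the perm-algebra identity $(\ad(a)b)\cdot_A x=0$ that the paper uses implicitly in its pairing computation.
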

\begin{proof}
	By Remark~\ref{rmk:radi}, Eq.~\eqref{eq:adil} holds.
	Thus, we have
	\begin{align*}
		(a^* \diamond_r b^*) \diamond_r c^* &= L^*(T_{r}( L^*(T_{r}(a^*)) b^*  )) c^* \overset{\eqref{eq:adil}}{=} L^*( T_{r}(a^*) \cdot_{A} T_{r}(b^*) ) c^*, \\
		(b^* \diamond_r a^*) \diamond_r c^* &= L^*(T_{r}( L^*(T_{r}(b^*)) a^*  )) c^* \overset{\eqref{eq:adil}}{=} L^*( T_{r}(b^*) \cdot_{A} T_{r}(a^*) ) c^*, \\
		a^* \diamond_r (b^* \diamond_r c^*) &= L^*( T_{r}(a^*)) L^*(  T_{r}(b^*) ) c^*.
	\end{align*}
	Note that $(A^*, L^*, \ad^*)$ is a representation of $(A, \cdot_{A})$.
	Therefore, we have
	\begin{equation*}
		(a^* \diamond_r b^*) \diamond_r c^* = (b^* \diamond_r a^*) \diamond_r c^* = a^* \diamond_r (b^* \diamond_r c^*).
	\end{equation*}
	That is, $(A^*, \diamond_r)$ is a perm algebra.
	Moreover, we have 
	\begin{align*}
		&\langle \ad^*(a)(a^* \diamond_r b^*), b \rangle = \langle L^*(T_r(a^*))b^*, a \cdot_{A} b - b \cdot_{A} a \rangle = \langle b^*, T_r(a^*) \cdot_{A} (a \cdot_{A} b - b \cdot_{A} a) \rangle,\\
		&\langle \ad^*(a)(b^* \diamond_r a^*), b\rangle = \langle L^*(T_r(b^*))a^*, a \cdot_{A} b - b \cdot_{A} a \rangle  \overset{\eqref{eq:sadii}}{=} \langle \ad^*(T_r(a^*))b^*, a \cdot_{A} b - b \cdot_{A} a \rangle \\
		&=\langle b^*, T_r(a^*) \cdot_{A} (a \cdot_{A} b - b \cdot_{A} a) \rangle, \\
		&\langle a^* \diamond_r (\ad^*(a)b^*), b\rangle = \langle L^*(T_r(a^*)) (\ad^*(a)b^*), b \rangle = \langle \ad^*(a)b^*, T_r(a^*) \cdot_{A} b\rangle \\
		&= \langle b^*, \ad(a)(T_{r}(a^*) \cdot_{A} b) \rangle = \langle b^*, a \cdot_{A} (T_{r}(a^*) \cdot_{A} b) - (T_{r}(a^*) \cdot_{A} b) \cdot_{A} a\rangle.
	\end{align*}
	Thus, Eq.~\eqref{eq:apar} holds for $(A^*, L^*, \ad^*, \diamond_r)$.
	Similarly, we show that Eq.~\eqref{eq:apal} holds for $(A^*, L^*, \ad^*, \diamond_r)$.
	Therefore, $(A^*, L^*, \ad^*, \diamond_r)$ is an $A$-perm algebra.
\end{proof}

\begin{definition}
	Let $(A, \cdot_{A})$ be a perm algebra and $(V, l, r, \cdot_{V})$ be an $A$-perm algebra.
	A linear map $T: V \to A$ is called a \textbf{relative Rota-Baxter operator of weight $\lambda$} on $(A, \cdot_{A})$ with respect to $(V, l, r, \cdot_V)$ if $T$ satisfies
	\begin{equation*}
		T(u) \cdot_{A} T(w) = T( l(T(u))w + r(T(w))u + \lambda u \cdot_{V} w ), \;\; \forall u, w \in V.
	\end{equation*}
	In particular, a relative Rota-Baxter operator $P: A \to A$ of weight $\lambda$ on $(A, \cdot_{A})$ with respect to $(A, L, R, \cdot_{A})$ is called a \textbf{Rota-Baxter operator of weight $\lambda$} on $(A, \cdot_{A})$, namely, a linear map $P: A \to A$ is called a Rota-Baxter operator of weight $\lambda$ on $(A, \cdot_{A})$ if
	\begin{equation*}
		P(a) \cdot_A P(b) = P(P(a) \cdot_A b + a \cdot_A P(b) + \lambda a \cdot_A b), 
		\;\; \forall a, b \in A.
	\end{equation*}
\end{definition}

\begin{remark}
	If the multiplication $\cdot_{V}$ on $V$ is trivial, that is, $u \cdot_{V} w = 0$ for all $u, w \in V$, then $T$ is simply called an \textbf{$\mathcal{O}$-operator of $(A, \cdot_{A})$ associated to $(V, l, r)$} \cite{lin2025infinite}.
\end{remark}

\begin{theorem}\label{thm:pybe2rrb}
	Let $(A, \cdot_{A})$ be a perm algebra and $r \in A \otimes A$ such that its skew-symmetric part is $(R, \ad)$-invariant.
	Then the following conditions are equivalent.
	\begin{enumerate}
		\item 
		$r$ is a solution of the perm Yang-Baxter equation in $(A, \cdot_{A})$ such that $(A, \cdot_{A}, \Delta_r)$ is a quasi-triangular perm bialgebra.
		
		\item 
		$T_{r}$ is a relative Rota-Baxter operator of weight $-1$ on $(A, \cdot_{A})$ with respect to the $A$-perm algebra $(A^*, L^*, \ad^*, \diamond_{r - \sigma(r)})$ where the multiplication $\diamond_{r - \sigma(r)}$ is defined by 
		\begin{equation}
			a^* \diamond_{r - \sigma(r)} b^* = L^*(T_{r-\sigma(r)}(a^*))b^*, \;\; \forall a^*, b^* \in A^*. \label{eq:apa2mul}
		\end{equation}
		That is, the following equation holds:
		\begin{equation}
			T_{r}(a^*) \cdot_{A} T_{r}(b^*) = T_{r}( L^*(T_{r}(a^*)b^* + \ad^*(T_{r}(b^*))a^* - a^* \diamond_{r-\sigma(r)} b^* ), \;\; \forall a^*, b^* \in A^*. \label{eq:pybe2rrb}
		\end{equation}
	\end{enumerate}
\end{theorem}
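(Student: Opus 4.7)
The plan is to reduce both conditions to the same explicit identity by applying Proposition~\ref{prop:pybeof} and Lemma~\ref{lem:radi} to the skew-symmetric element $r - \sigma(r)$. First I would observe that since the skew-symmetric part of $r$ is $(R, \ad)$-invariant, the tensor $r - \sigma(r)$ is itself skew-symmetric and $(R, \ad)$-invariant. Applying Proposition~\ref{prop:r2apa} with $r - \sigma(r)$ in place of $r$ then certifies that $(A^*, L^*, \ad^*, \diamond_{r-\sigma(r)})$ is an $A$-perm algebra, so the relative Rota-Baxter framework invoked in condition (2) is well-posed.

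Next, I would invoke Proposition~\ref{prop:pybeof}, which shows that condition (1) is equivalent to Eq.~\eqref{eq:pybeof}, namely
\begin{equation*}
T_r(a^*) \cdot_A T_r(b^*) = T_r\bigl(L^*(T_r(a^*))b^* + \ad^*(T_{\sigma(r)}(b^*))a^*\bigr), \;\; \forall a^*, b^* \in A^*.
\end{equation*}
The key manipulation is then the linear decomposition $T_{\sigma(r)} = T_r - T_{r - \sigma(r)}$, which splits the second summand on the right as
\begin{equation*}
\ad^*(T_{\sigma(r)}(b^*))a^* = \ad^*(T_r(b^*))a^* - \ad^*(T_{r-\sigma(r)}(b^*))a^*.
\end{equation*}

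Since $r - \sigma(r)$ is skew-symmetric and $(R, \ad)$-invariant, I would apply the equivalence \eqref{it:ad1}$\Leftrightarrow$\eqref{it:ad3} of Lemma~\ref{lem:radi} to this tensor, which yields
\begin{equation*}
\ad^*(T_{r-\sigma(r)}(b^*))a^* = L^*(T_{r-\sigma(r)}(a^*))b^* = a^* \diamond_{r-\sigma(r)} b^*,
\end{equation*}
by the very definition \eqref{eq:apa2mul} of $\diamond_{r-\sigma(r)}$. Substituting this back converts Eq.~\eqref{eq:pybeof} precisely into Eq.~\eqref{eq:pybe2rrb}, and since every step is reversible, the equivalence of (1) and (2) follows.

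The hard part, if any, is merely bookkeeping: one must track the three linear maps $T_r$, $T_{\sigma(r)}$, and $T_{r-\sigma(r)}$ simultaneously and invoke the correct clause of Lemma~\ref{lem:radi}, applied not to $r$ itself but to its skew-symmetric companion $r - \sigma(r)$ (which is where the $(R, \ad)$-invariance hypothesis enters essentially). No further computation beyond these substitutions is required.
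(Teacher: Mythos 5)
Your proposal is correct and follows essentially the same route as the paper's proof: both reduce condition (1) to Eq.~\eqref{eq:pybeof} via Proposition~\ref{prop:pybeof}, use Proposition~\ref{prop:r2apa} to certify the $A$-perm algebra structure on $(A^*, L^*, \ad^*, \diamond_{r-\sigma(r)})$, and convert between the two right-hand sides by splitting $T_{\sigma(r)} = T_r - T_{r-\sigma(r)}$ and applying Eq.~\eqref{eq:sadii} of Lemma~\ref{lem:radi} to the skew-symmetric, $(R,\ad)$-invariant tensor $r - \sigma(r)$. The only difference is cosmetic: the paper rewrites the right-hand side of Eq.~\eqref{eq:pybe2rrb} into that of Eq.~\eqref{eq:pybeof}, whereas you go in the opposite direction.
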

\begin{proof}
	Note that $r-\sigma(r)$ is skew-symmetric.
	By Proposition~\ref{prop:r2apa}, $(A^*, L^*, \ad^*, \diamond_{r-\sigma(r)})$ is an $A$-perm algebra.
	For all $a^*, b^* \in A^*$, we have 
	\begin{align*}
		&\;\; T_{r}( L^*(T_{r}(a^*)b^* + \ad^*(T_{r}(b^*))a^* - a^* \diamond_{r-\sigma(r)} b^* ) \\
		&\overset{\hphantom{\eqref{eq:sadii}}}{=} T_{r}( L^*(T_{r}(a^*)b^* + \ad^*(T_{r}(b^*))a^* - L^*(T_{r-\sigma(r)}(a^*))b^* ) \\
		&\overset{\eqref{eq:sadii}}{=} T_{r}( L^*(T_{r}(a^*)b^* + \ad^*(T_{r}(b^*))a^* - \ad^*(T_{r-\sigma(r)}(b^*))a^* ) \\
		&\overset{\hphantom{\eqref{eq:sadii}}}{=} T_{r}( L^*(T_{r}(a^*)b^* + \ad^*(T_{\sigma(r)}(b^*))a^* ).
	\end{align*}
	Thus, Eq.~\eqref{eq:pybe2rrb} holds if and only if Eq.~\eqref{eq:pybeof}.
	Therefore, the theorem follows immediately from Proposition~\ref{prop:pybeof}.
\end{proof}

When taking a symmetric 2-tensor $r \in A \otimes A$ in Theorem~\ref{thm:pybe2rrb}, we recover the following result established in \cite{lin2025infinite}.
\begin{corollary}
	{\rm (\cite[Corollary~3.35]{lin2025infinite})}
	Let $(A, \cdot_{A})$ be a perm algebra and $r \in A \otimes A$ be symmetric.
	Then $r$ is a solution of the perm Yang-Baxter equation in $(A, \cdot_{A})$ if and only if $T_{r}: A^* \to A$ is an $\mathcal{O}$-operator of $(A, \cdot_{A})$ associated to $(A^*, L^*, \ad^*)$.
\end{corollary}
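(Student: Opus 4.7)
The plan is to derive the corollary as an immediate specialization of Theorem~\ref{thm:pybe2rrb}. First I would observe that if $r$ is symmetric then $r - \sigma(r) = 0$, so the skew-symmetric part of $r$ vanishes and is, trivially, $(R, \ad)$-invariant. Consequently the hypothesis of Theorem~\ref{thm:pybe2rrb} is satisfied, and we may apply its equivalence.

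Next, I would examine the $A$-perm algebra $(A^*, L^*, \ad^*, \diamond_{r - \sigma(r)})$ that appears on the operator side of Theorem~\ref{thm:pybe2rrb}. Since $r - \sigma(r) = 0$, we have $T_{r - \sigma(r)} = 0$, and hence from the definition \eqref{eq:apa2mul} the multiplication $\diamond_{r-\sigma(r)}$ on $A^*$ is identically zero. Under this trivial multiplication, the relative Rota-Baxter identity \eqref{eq:pybe2rrb} of weight $-1$ degenerates to
\begin{equation*}
	T_{r}(a^*) \cdot_A T_{r}(b^*) = T_{r}\bigl( L^*(T_{r}(a^*)) b^* + \ad^*(T_{r}(b^*)) a^* \bigr), \;\; \forall a^*, b^* \in A^*,
\end{equation*}
which, according to the remark following the definition of relative Rota-Baxter operators, is exactly the condition for $T_r$ to be an $\mathcal{O}$-operator of $(A, \cdot_A)$ associated to the representation $(A^*, L^*, \ad^*)$ (cf.\ Proposition~\ref{prop:cadjr}).

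Combining these two observations, Theorem~\ref{thm:pybe2rrb} directly yields the equivalence: $r$ solves the perm Yang-Baxter equation in $(A, \cdot_A)$ if and only if $T_r$ is an $\mathcal{O}$-operator associated to $(A^*, L^*, \ad^*)$. Since both the verification that a symmetric $r$ has $(R,\ad)$-invariant skew-symmetric part and the collapse of the $\diamond_{r-\sigma(r)}$ term are essentially tautological, there is no real obstacle here; the only point worth flagging is to confirm that the weight-$(-1)$ term $\lambda\, u \cdot_{V} w$ genuinely disappears so that the resulting identity matches the standard $\mathcal{O}$-operator equation. Thus the corollary admits a one-line proof once Theorem~\ref{thm:pybe2rrb} is invoked.
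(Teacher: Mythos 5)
Your proposal is correct and follows exactly the route the paper intends: the corollary is stated as the specialization of Theorem~\ref{thm:pybe2rrb} to symmetric $r$, where $r-\sigma(r)=0$ makes the $(R,\ad)$-invariance hypothesis vacuous, kills the multiplication $\diamond_{r-\sigma(r)}$, and collapses the weight-$(-1)$ relative Rota-Baxter identity \eqref{eq:pybe2rrb} to the $\mathcal{O}$-operator equation for $(A^*,L^*,\ad^*)$. No gaps.
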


Recall that a bilinear form $\mathfrak{B} \in \otimes^2 A^*$ on a perm algebra $(A, \cdot_A)$ is called \textbf{invariant} if 
\begin{equation*}
	\mathfrak{B}(a \cdot_{A} b, c) = \mathfrak{B}(a, b \cdot_{A} c - c \cdot_{A} b), 
	\;\; \forall a, b, c \in A.
\end{equation*}
A \textbf{quadratic perm algebra} $(A, \cdot_A, \mathfrak{B})$ is a perm algebra $(A, \cdot_A)$ equipped with a nondegenerate skew-symmetric invariant bilinear form $\mathfrak{B} \in \otimes^2 A^*$.

Let $\mathfrak{B}$ be a nondegenerate bilinear form on a vector space $A$.
Then there is an isomorphism $\mathfrak{B}^\sharp: A \to A^*$ given by 
\begin{equation*}
	\mathfrak{B}(a, b) = \langle \mathfrak{B}^\sharp(a), b\rangle, \;\; \forall a, b \in A.
\end{equation*}
Define a 2-tensor $\phi_\mathfrak{B} \in A \otimes A$ to be the tensor form of $(\mathfrak{B}^{\sharp})^{-1}$, that is, $T_{\phi_{\mathfrak{B}}} = (\mathfrak{B}^{\sharp})^{-1}$.

\begin{lemma}\label{lem:qpiv}
	Let $(A, \cdot_{A})$ be a perm algebra and $\mathfrak{B}$ be a nondegenerate bilinear form on $A$.
	Then $(A, \cdot_A, \mathfrak{B})$ is a quadratic perm algebra if and only if the corresponding $\phi_{\mathfrak{B}} \in A \otimes A$ is skew-symmetric and $(R, \ad)$-invariant.
\end{lemma}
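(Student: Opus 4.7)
The plan is to translate each of the two defining properties of a quadratic perm algebra (skew-symmetry and invariance of $\mathfrak{B}$) into the corresponding properties of the tensor $\phi_{\mathfrak{B}} \in A \otimes A$, using that $T_{\phi_{\mathfrak{B}}} = (\mathfrak{B}^{\sharp})^{-1}$. The translations are strictly parallel, so the whole statement will reduce to two independent bijective correspondences. Throughout, I would abbreviate $T := T_{\phi_{\mathfrak{B}}}$.

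First I would unpack the identity
\begin{equation*}
\langle \phi_{\mathfrak{B}}, a^{*} \otimes b^{*}\rangle = \langle T(a^{*}), b^{*}\rangle = \mathfrak{B}(T(b^{*}), T(a^{*})),
\end{equation*}
where the second equality uses $b^{*} = \mathfrak{B}^{\sharp}(T(b^{*}))$. Since $\mathfrak{B}^{\sharp}$ is a bijection, letting $a = T(a^{*})$ and $b = T(b^{*})$ range freely over $A$ shows immediately that $\phi_{\mathfrak{B}} - \sigma(\phi_{\mathfrak{B}}) = 0$ holds in $A \otimes A$ if and only if $\mathfrak{B}(b,a) + \mathfrak{B}(a,b) = 0$ for all $a,b \in A$, i.e.\ $\mathfrak{B}$ is skew-symmetric.

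Next, I would invoke Lemma~\ref{lem:radi}: $\phi_{\mathfrak{B}}$ is $(R, \ad)$-invariant if and only if
\begin{equation*}
T(\ad^{*}(c)\,a^{*}) = T(a^{*}) \cdot_{A} c, \quad \forall c \in A,\; a^{*} \in A^{*}.
\end{equation*}
Applying the isomorphism $\mathfrak{B}^{\sharp}$ to both sides (and using $\mathfrak{B}^{\sharp}\circ T = \id$), this is equivalent to $\ad^{*}(c)\,a^{*} = \mathfrak{B}^{\sharp}(T(a^{*}) \cdot_{A} c)$, which after pairing with an arbitrary $b \in A$ and writing $a := T(a^{*})$ becomes
\begin{equation*}
\mathfrak{B}(a,\, c \cdot_{A} b - b \cdot_{A} c) = \mathfrak{B}(a \cdot_{A} c,\, b), \quad \forall a, b, c \in A.
\end{equation*}
Swapping $b \leftrightarrow c$ yields precisely the invariance condition $\mathfrak{B}(a \cdot_{A} b, c) = \mathfrak{B}(a, b \cdot_{A} c - c \cdot_{A} b)$ for $\mathfrak{B}$.

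Combining the two equivalences gives the lemma. There is no substantive obstacle; the only care required is the dualization bookkeeping (tracking $T$, $\mathfrak{B}^{\sharp}$, and the identification $T_{\phi_{\mathfrak{B}}} = (\mathfrak{B}^{\sharp})^{-1}$) so that the equivalences are applied to \emph{arbitrary} elements, which is guaranteed by the nondegeneracy of $\mathfrak{B}$.
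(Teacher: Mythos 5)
Your proof is correct and follows essentially the same route as the paper: translate skew-symmetry of $\mathfrak{B}$ into skew-symmetry of $\phi_{\mathfrak{B}}$ via the pairing $\langle \phi_{\mathfrak{B}}, a^*\otimes b^*\rangle = \mathfrak{B}(T(b^*),T(a^*))$, and reduce invariance of $\mathfrak{B}$ to Eq.~\eqref{eq:adi} of Lemma~\ref{lem:radi}. The only (harmless) difference is that your second equivalence does not invoke skew-symmetry of $\mathfrak{B}$, whereas the paper's computation of $\mathfrak{B}(a\cdot_A b,c)-\mathfrak{B}(a,b\cdot_A c-c\cdot_A b)$ does.
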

\begin{proof}
	Let $a^*, b^* \in A^*$.
	Note that
	\begin{align*}
		\langle \phi_{\mathfrak{B}}, a^* \otimes b^*\rangle = \langle (\mathfrak{B}^{\sharp})^{-1}(a^*), b^*\rangle = \mathfrak{B}((\mathfrak{B}^{\sharp})^{-1}(b^*), (\mathfrak{B}^{\sharp})^{-1}(a^*)).
	\end{align*}
	It is straightforward to show that $\mathfrak{B}$ is skew-symmetric if and only if $\phi_{\mathfrak{B}}$ is skew-symmetric.
	Let $a, b, c \in A$.
	Under the skew-symmetric assumption, we have 
	\begin{align*}
		&\mathfrak{B}(a \cdot_{A} b, c) - \mathfrak{B}(a, b \cdot_{A} c - c \cdot_{A} b) =-\langle \mathfrak{B}^\sharp(c), a \cdot_{A} b \rangle - \langle \mathfrak{B}^\sharp(a), \ad(b)c\rangle \\
		&=-\langle \mathfrak{B}^\sharp(c), a \cdot_{A} b\rangle - \langle c, \ad^*(b)\mathfrak{B}^\sharp(a) \rangle =-\langle \mathfrak{B}^\sharp(c), a \cdot_{A} b\rangle - \mathfrak{B}( (\mathfrak{B}^\sharp)^{-1}( \ad^*(b) \mathfrak{B}^\sharp(a) ), c) \\
		&=-\langle \mathfrak{B}^\sharp(c), a \cdot_{A} b\rangle + \langle \mathfrak{B}^\sharp(c),  (\mathfrak{B}^\sharp)^{-1}(\ad^*(b) \mathfrak{B}^\sharp(a) ) \rangle.
	\end{align*}
	Thus, $\mathfrak{B}$ is invariant on the perm algebra $(A, \cdot_A)$ if and only 
	\begin{equation*}
		(\mathfrak{B}^\sharp)^{-1}(\ad^*(b) \mathfrak{B}^\sharp(a) ) - (\mathfrak{B}^\sharp)^{-1} (\mathfrak{B}^\sharp(a)) \cdot_{A} b = 0,
	\end{equation*}
	which, by Lemma~\ref{lem:radi}, is equivalent to the $(R, \ad)$-invariance of $\phi_{\mathfrak{B}}$.
	This completes the proof.
\end{proof}

\begin{proposition}\label{prop:pybe2rbo}
	Let $(A, \cdot_{A}, \mathfrak{B})$ be a quadratic perm algebra and $r \in A \otimes A$ such that its skew-symmetric part is $(R, \ad)$-invariant.
	Define a linear map $P: A \to A$ by 
	\begin{equation}
		P(a) = T_{r} \mathfrak{B}^\sharp(a), \;\; \forall a \in A. \label{eq:rb}
	\end{equation}
	Then $r$ is a solution of the perm Yang-Baxter equation in $(A, \cdot_{A})$ if and only if $P$ satisfies
	\begin{equation}
		P(a) \cdot_{A} P(b) = P(P(a) \cdot_{A} b + a \cdot_{A} P(b) - a \cdot_{A} T_{r - \sigma(r)}\mathfrak{B}^\sharp(b) ) \;\; \forall a, b \in A. \label{eq:prbw}
	\end{equation}
\end{proposition}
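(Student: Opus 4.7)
The plan is to reduce Eq.~\eqref{eq:prbw} to Eq.~\eqref{eq:pybe2rrb} by transporting the condition on $T_r\colon A^* \to A$ through the isomorphism $\mathfrak{B}^\sharp\colon A \to A^*$. Since $\mathfrak{B}^\sharp$ is a linear isomorphism and $P = T_r \circ \mathfrak{B}^\sharp$, it suffices to show that for all $a,b \in A$, Eq.~\eqref{eq:pybe2rrb} specialized to $a^* = \mathfrak{B}^\sharp(a)$ and $b^* = \mathfrak{B}^\sharp(b)$ coincides with Eq.~\eqref{eq:prbw}. By Theorem~\ref{thm:pybe2rrb}, under the standing hypothesis that the skew-symmetric part of $r$ is $(R,\ad)$-invariant, $r$ solves the perm Yang-Baxter equation if and only if Eq.~\eqref{eq:pybe2rrb} holds; so the task is reduced to translating the three summands on the right-hand side of Eq.~\eqref{eq:pybe2rrb} term-by-term, using Eq.~\eqref{eq:apa2mul} to expand the product $a^* \diamond_{r-\sigma(r)} b^*$.

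The key technical ingredients are the identities
\[
L^*(a)\mathfrak{B}^\sharp(b) = \mathfrak{B}^\sharp(a \cdot_A b) = \ad^*(b)\mathfrak{B}^\sharp(a), \qquad \ad^*(a)\mathfrak{B}^\sharp(b) = \mathfrak{B}^\sharp(b \cdot_A a),
\]
which follow by unraveling the pairings and applying the skew-symmetry together with the invariance of $\mathfrak{B}$. Using these with $a^* = \mathfrak{B}^\sharp(a)$, $b^* = \mathfrak{B}^\sharp(b)$, the first two terms on the right-hand side of Eq.~\eqref{eq:pybe2rrb} translate as
\[
T_r\bigl(L^*(P(a))\mathfrak{B}^\sharp(b)\bigr) = P(P(a) \cdot_A b), \qquad T_r\bigl(\ad^*(P(b))\mathfrak{B}^\sharp(a)\bigr) = P(a \cdot_A P(b)),
\]
which match the first two summands on the right-hand side of Eq.~\eqref{eq:prbw}.

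The remaining task, which I expect to be the main technical obstacle, is to convert the last term $T_r\bigl(L^*(T_{r-\sigma(r)}\mathfrak{B}^\sharp(a))\mathfrak{B}^\sharp(b)\bigr)$ into $P(a \cdot_A T_{r-\sigma(r)}\mathfrak{B}^\sharp(b))$ rather than the more immediate $P(T_{r-\sigma(r)}\mathfrak{B}^\sharp(a) \cdot_A b)$ that would result from a naive application of the identity $L^*(x)\mathfrak{B}^\sharp(b) = \mathfrak{B}^\sharp(x \cdot_A b)$. To steer the computation to the correct form, I will invoke Lemma~\ref{lem:radi}(\ref{it:ad3}) applied to the skew-symmetric $(R, \ad)$-invariant tensor $r - \sigma(r)$, which yields $L^*(T_{r-\sigma(r)}(a^*))b^* = \ad^*(T_{r-\sigma(r)}(b^*))a^*$, and then apply the identity $\ad^*(c)\mathfrak{B}^\sharp(a) = \mathfrak{B}^\sharp(a \cdot_A c)$ with $c = T_{r-\sigma(r)}\mathfrak{B}^\sharp(b)$. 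Assembling the three translated summands produces precisely Eq.~\eqref{eq:prbw}, and since $\mathfrak{B}^\sharp$ is a bijection, this translation is reversible, yielding both directions of the equivalence.
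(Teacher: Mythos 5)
Your proposal is correct and follows essentially the same route as the paper's proof: substitute $a^* = \mathfrak{B}^\sharp(a)$, $b^* = \mathfrak{B}^\sharp(b)$, translate each term of Eq.~\eqref{eq:pybe2rrb} into the corresponding term of Eq.~\eqref{eq:prbw} using the skew-symmetry/invariance identities for $\mathfrak{B}$ (which the paper obtains via Lemma~\ref{lem:qpiv} and Eqs.~\eqref{eq:adi}, \eqref{eq:adil} applied to $\phi_{\mathfrak{B}}$), handle the $\diamond_{r-\sigma(r)}$ term with Eq.~\eqref{eq:sadii}, and conclude by Theorem~\ref{thm:pybe2rrb}. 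Your identified ``main obstacle'' for the last term and its resolution via Lemma~\ref{lem:radi}(\ref{it:ad3}) is exactly the step the paper takes.
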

\begin{proof}
	By Lemma~\ref{lem:qpiv}, $\phi_{\mathfrak{B}}$ is skew-symmetric and $(R, \ad)$-invariant.
	Note that $r - \sigma(r)$ is skew-symmetric and $(R, \ad)$-invariant.
	For all $a , b \in A$, setting $a^* = \mathfrak{B}^\sharp(a), b^* = \mathfrak{B}^\sharp(b)$,
	we have
	\begin{align*}
		&P(a) \cdot_{A} P(b) = T_{r}(a^*) \cdot_{A} T_{r}(b^*), \\
		&P(P(a) \cdot_{A} b) = T_{r} \mathfrak{B}^\sharp( T_{r}(a^*) \cdot_{A} T_{\phi_{\mathfrak{B}}}(b^*)) \overset{\eqref{eq:adil}}{=} T_{r}( L^*(T_{r}(a^*))b^* ), \\
		&P(a \cdot_{A} P(b)) = T_{r} \mathfrak{B}^\sharp( T_{\phi_{\mathfrak{B}}}(a^*) \cdot_{A} T_{r}(b^*) ) \overset{\eqref{eq:adi}}{=} T_{r}( \ad^*(T_{r}(b^*)) a^* ), \\
		&P(a \cdot_{A} T_{r - \sigma(r)}\mathfrak{B}^\sharp(b)) =T_{r} \mathfrak{B}^\sharp( T_{\phi_{\mathfrak{B}}}(a^*) \cdot_{A} T_{r - \sigma(r)}(b^*) ) \overset{\eqref{eq:adi}}{=} T_{r}( \ad^*(T_{r-\sigma(r)}(b^*)) a^* ) \\
		&\overset{\eqref{eq:sadii}}{=} T_{r}( L^*(T_{r-\sigma(r)}(a^*)) b^* ) = T_{r}( a^* \diamond_{r - \sigma(r)} b^*),
	\end{align*}
	where $\diamond_{r-\sigma(r)}$ is defined by Eq.~\eqref{eq:apa2mul}.
	Thus, Eq.~\eqref{eq:prbw} holds if and only Eq.~\eqref{eq:pybe2rrb} holds.
	Therefore, the conclusion follows from Theorem~\ref{thm:pybe2rrb}.
\end{proof}

\section{Quadratic Rota-Baxter perm algebras, triangular and factorizable perm bialgebras}\label{sec:qrflb}

In this section, we introduce the notion of quadratic Rota-Baxter perm algebras of weights. 
We show that a quadratic Rota-Baxter perm algebra of weight $0$ induces a triangular perm bialgebra, and there is a one-to-one correspondence between quadratic Rota-Baxter perm algebras of nonzero weights and factorizable perm bialgebras.

\begin{definition}
	A \textbf{quadratic Rota-Baxter perm algebra of weight $\lambda$} is a quadruple $(A, \cdot_{A}, \mathfrak{B}, P)$, where $(A, \cdot_A, \mathfrak{B})$ is a quadratic perm algebra and $P$ is a Rota-Baxter operator of weight $\lambda$ on $(A, \cdot_A)$ such that the following compatibility condition holds:
	\begin{equation}
		\mathfrak{B}(P(a), b) + \mathfrak{B}(a, P(b)) + \lambda \mathfrak{B}(a, b) = 0, \;\; \forall a, b \in A. \label{eq:qrbp}
	\end{equation}
\end{definition}

\begin{proposition}\label{prop:qrb2sqrb}
	Let $(A, \cdot_{A}, \mathfrak{B})$ be a quadratic perm algebra and $P: A \to A$ be a linear map.
	Then $(A, \cdot_{A}, \mathfrak{B}, P)$ is a quadratic Rota-Baxter perm algebra of weight $\lambda$ if and only if $(A, \cdot_{A}, -\mathfrak{B}, - (\lambda \id + P))$ is a quadratic Rota-Baxter perm algebra of weight $\lambda$.
\end{proposition}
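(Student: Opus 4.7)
The plan is to verify each of the three defining properties of the quadratic Rota-Baxter perm algebra structure on $(A, \cdot_A, -\mathfrak{B}, -(\lambda\id + P))$ under the assumption on $(A, \cdot_A, \mathfrak{B}, P)$, and then to observe that the operation $(\mathfrak{B}, P) \mapsto (-\mathfrak{B}, -(\lambda\id + P))$ is an involution, so that the converse implication is automatic.

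First, skew-symmetry, nondegeneracy, and invariance on $(A, \cdot_A)$ are all preserved under a global sign change, so $(A, \cdot_A, -\mathfrak{B})$ is a quadratic perm algebra whenever $(A, \cdot_A, \mathfrak{B})$ is. The substantive step is to verify that $P' := -(\lambda\id + P)$ is a Rota-Baxter operator of weight $\lambda$: expanding $P'(a) \cdot_A P'(b) = (\lambda a + P(a)) \cdot_A (\lambda b + P(b))$ bilinearly and applying the weight-$\lambda$ Rota-Baxter identity for $P$ to the cross-term $P(a) \cdot_A P(b)$, and separately expanding $P'\bigl(P'(a) \cdot_A b + a \cdot_A P'(b) + \lambda a \cdot_A b\bigr)$ using $P' = -\lambda\id - P$ and linearity, a short comparison of the resulting combinations of $a \cdot_A b$, $P(a) \cdot_A b$, $a \cdot_A P(b)$, $P(a\cdot_A b)$, and $P(P(a)\cdot_A b) + P(a \cdot_A P(b))$ shows the two sides agree.

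Next, the compatibility condition for $(-\mathfrak{B}, P')$ reduces, after substituting $P' = -\lambda\id - P$ and cancelling the resulting $\lambda \mathfrak{B}(a, b)$ contributions, to $\mathfrak{B}(P(a), b) + \mathfrak{B}(a, P(b)) + \lambda \mathfrak{B}(a, b) = 0$, which is precisely the hypothesis. Finally, applying the transformation $(\mathfrak{B}, P) \mapsto (-\mathfrak{B}, -(\lambda\id + P))$ a second time recovers $(\mathfrak{B}, P)$, since $-(-\mathfrak{B}) = \mathfrak{B}$ and $-\bigl(\lambda\id + (-(\lambda\id + P))\bigr) = P$, so the converse direction is immediate. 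I do not foresee a genuine obstacle: the proposition expresses a $\mathbb{Z}/2$-symmetry of the definition and amounts to routine bookkeeping once the involutive nature of $P \mapsto -(\lambda\id + P)$ is identified.
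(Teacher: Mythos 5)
Your proposal is correct and follows essentially the same route as the paper: verify that $-\mathfrak{B}$ is still a quadratic form, that $-(\lambda\id+P)$ is a Rota-Baxter operator of weight $\lambda$ (a fact the paper simply cites as well known, which you instead expand explicitly), and that the compatibility condition transforms into itself up to an overall sign. Your additional observation that the map $(\mathfrak{B},P)\mapsto(-\mathfrak{B},-(\lambda\id+P))$ is an involution cleanly handles the converse, matching the paper's ``if and only if'' bookkeeping.
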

\begin{proof}
	It is well known that $P$ is a Rota-Baxter operator of weight $\lambda$ if and only if $-(\lambda \id + P)$ is a Rota-Baxter operator of weight $\lambda$.
	For all $a, b \in A$, we have
	\begin{equation*}
		\mathfrak{B}(-(\lambda \id + P)(a), b) + \mathfrak{B}(a, -(\lambda \id + P)(b)) + \lambda\mathfrak{B}(a, b) = - \mathfrak{B}(P(a), b) - \mathfrak{B}(a, P(b)) - \lambda \mathfrak{B}(a, b).
	\end{equation*}
	Thus, $P$ satisfies Eq.~\eqref{eq:qrbp} if and only if $-(\lambda \id + P)$ satisfies Eq.~\eqref{eq:qrbp}, which completes the proof.
\end{proof}

\begin{proposition}\label{prop:smprb}
	Let $P: A \to A$ be a Rota-Baxter operator of weight $\lambda$ on a perm algebra $(A, \cdot_{A})$.
	Then $(A \ltimes_{L^*, \ad^*} A^*, \cdot, \mathfrak{B}_d, P - (\lambda \id + P)^*)$ is a quadratic Rota-Baxter perm algebra of weight $\lambda$, where the bilinear form $\mathfrak{B}_d$ on $A \oplus A^*$ is given by 
	\begin{equation*}
		\mathfrak{B}_d(a+a^*, b+b^*) = \langle a, b^*\rangle - \langle a^*, b\rangle, \;\; \forall a, b \in A, \; a^*, b^* \in A^*.
	\end{equation*} 
\end{proposition}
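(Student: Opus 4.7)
My plan is to verify separately the three requirements in the definition of a quadratic Rota-Baxter perm algebra of weight $\lambda$: that the ambient triple $(A \ltimes_{L^*, \ad^*} A^*, \cdot, \mathfrak{B}_d)$ is a quadratic perm algebra, that $\hat P := P \oplus (-(\lambda\id+P)^*)$ (acting as $P$ on $A$ and as $-(\lambda\id+P)^*$ on $A^*$) is a Rota-Baxter operator of weight $\lambda$, and that the pairing compatibility \eqref{eq:qrbp} holds.

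First, since $(A^*, L^*, \ad^*)$ is a representation of $(A, \cdot_A)$ by Proposition~\ref{prop:cadjr}, the semi-direct product $(A \ltimes_{L^*, \ad^*} A^*, \cdot)$ is automatically a perm algebra. The hyperbolic pairing $\mathfrak{B}_d$ is manifestly nondegenerate and skew-symmetric. For invariance I would expand both sides of
\[
\mathfrak{B}_d((a+a^*)\cdot(b+b^*), c+c^*) = \mathfrak{B}_d(a+a^*, (b+b^*)\cdot(c+c^*) - (c+c^*)\cdot(b+b^*))
\]
using the semi-direct product rule together with the definitions of $L^*$ and $\ad^*$; each side then collapses to the common expression $\langle a\cdot_A b, c^*\rangle - \langle b^*, a\cdot_A c\rangle - \langle a^*, b\cdot_A c - c\cdot_A b\rangle$, so the triple is a quadratic perm algebra.

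Next, I would dispatch the pairing compatibility \eqref{eq:qrbp} by a direct calculation. Writing out $\mathfrak{B}_d(\hat P(a+a^*), b+b^*) + \mathfrak{B}_d(a+a^*, \hat P(b+b^*))$, the cross terms $\langle P(a), b^*\rangle$ and $\langle a^*, P(b)\rangle$ cancel against the matching contributions from the duals $\langle a^*, (\lambda\id+P)(b)\rangle$ and $\langle (\lambda\id+P)(a), b^*\rangle$, leaving precisely $\lambda\langle a^*, b\rangle - \lambda\langle a, b^*\rangle = -\lambda \mathfrak{B}_d(a+a^*, b+b^*)$.

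The main obstacle is the Rota-Baxter identity $\hat P(x)\cdot \hat P(y) = \hat P(\hat P(x)\cdot y + x\cdot \hat P(y) + \lambda x\cdot y)$ on the semi-direct product. Splitting into $A$- and $A^*$-components, the $A$-component reads $P(a)\cdot_A P(b) = P(P(a)\cdot_A b + a\cdot_A P(b) + \lambda a\cdot_A b)$, which is exactly the Rota-Baxter identity for $P$ on $(A, \cdot_A)$ and so vanishes by hypothesis. The $A^*$-component is the substantive verification: after unpacking the semi-direct product multiplication one obtains an equality of elements of $A^*$ involving $L^*, \ad^*$ and $(\lambda\id+P)^*$-images. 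I would pair this equality against an arbitrary $c \in A$ to convert it into an identity in $A$ built from $P(a), P(b), c$ and $\lambda$-weighted corrections; this identity then follows by combining the Rota-Baxter relation for $P$, the parallel Rota-Baxter relation for $-(\lambda\id+P)$ (which is again a Rota-Baxter operator of weight $\lambda$, as recorded in the proof of Proposition~\ref{prop:qrb2sqrb}), and the representation relations \eqref{eq:prep1}--\eqref{eq:prep2} that control the interaction of $L^*$ and $\ad^*$.
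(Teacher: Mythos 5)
Your proposal is correct and follows the same route as the paper, which simply records this result as ``a straightforward computation'': a direct verification of the quadratic perm algebra structure, the compatibility condition \eqref{eq:qrbp}, and the Rota--Baxter identity componentwise on $A\oplus A^*$. All three steps check out as you describe them; in particular, pairing the $A^*$-component of the Rota--Baxter identity against an arbitrary $c\in A$ reduces it exactly to instances of the Rota--Baxter identity for $P$ on $(A,\cdot_A)$ with permuted arguments, so the argument closes.
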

\begin{proof}
	It follows from a straightforward computation.
\end{proof}

\begin{lemma}\label{lem:qrbpc}
	{\rm (\cite{bai2024quasi})}
	Let $A$ be a vector space and $\mathfrak{B}$ be a nondegenerate skew-symmetric bilinear form.
	Let $r \in A \otimes A$, $\lambda \in \mathbb{K}$ and $P: A \to A$ be a linear map defined by Eq.~\eqref{eq:rb}.
	Then $r$ satisfies 
	\begin{equation}
		r - \sigma(r) = - \lambda \phi_{\mathfrak{B}} \label{eq:rsric}
	\end{equation}
	if and only if $P$ satisfies the following equation:
	\begin{equation}
		\mathfrak{B}(P(a), b) + \mathfrak{B}(a, P(b)) + \lambda \mathfrak{B}(a, b) = 0. \label{eq:qrbpc}
	\end{equation} 
\end{lemma}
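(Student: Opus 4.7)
The plan is to translate the operator identity (\ref{eq:qrbpc}) into a pairing statement on $A^* \otimes A^*$ and then invoke the nondegeneracy of $\mathfrak{B}$ to deduce the tensor identity (\ref{eq:rsric}). Since $\mathfrak{B}^\sharp: A \to A^*$ is a linear isomorphism, it suffices to verify, for all $a,b \in A$, the identity (\ref{eq:qrbpc}) after substituting $a^{*} = \mathfrak{B}^\sharp(a)$ and $b^{*} = \mathfrak{B}^\sharp(b)$, and then rewrite every term as a pairing of $r$, $\sigma(r)$ or $\phi_{\mathfrak{B}}$ against $a^* \otimes b^*$.

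First I would record the three elementary identities I need. From $P(a) = T_r \mathfrak{B}^\sharp(a)$ together with the skew-symmetry $\mathfrak{B}(c,d) = -\mathfrak{B}(d,c) = -\langle \mathfrak{B}^\sharp(d), c\rangle$, I get
\[
\mathfrak{B}(P(a), b) = -\langle b^*, T_{r}(a^*)\rangle = -\langle r, a^* \otimes b^*\rangle.
\]
Using $\mathfrak{B}(c,d) = \langle \mathfrak{B}^\sharp(c), d\rangle$ directly yields
\[
\mathfrak{B}(a, P(b)) = \langle a^*, T_{r}(b^*)\rangle = \langle r, b^* \otimes a^*\rangle = \langle \sigma(r), a^* \otimes b^*\rangle.
\]
Finally, since $T_{\phi_{\mathfrak{B}}} = (\mathfrak{B}^\sharp)^{-1}$, one computes
\[
\langle \phi_{\mathfrak{B}}, a^* \otimes b^*\rangle = \langle (\mathfrak{B}^\sharp)^{-1}(a^*), b^*\rangle = \langle a, \mathfrak{B}^\sharp(b)\rangle = \mathfrak{B}(b, a) = -\mathfrak{B}(a,b),
\]
so $\mathfrak{B}(a,b) = -\langle \phi_{\mathfrak{B}}, a^* \otimes b^*\rangle$.

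Adding these three evaluations gives
\[
\mathfrak{B}(P(a),b) + \mathfrak{B}(a,P(b)) + \lambda \mathfrak{B}(a,b) = -\langle r - \sigma(r) + \lambda \phi_{\mathfrak{B}},\, a^* \otimes b^*\rangle.
\]
Since $\mathfrak{B}^\sharp$ is a bijection, the elements $a^* \otimes b^*$ span (a dense/generating subset of) $A^* \otimes A^*$ as $a, b$ range over $A$, so the left-hand side vanishes for all $a, b \in A$ if and only if $r - \sigma(r) + \lambda \phi_{\mathfrak{B}} = 0$, i.e., (\ref{eq:rsric}) holds. This is the entire argument; there is no genuine obstacle beyond bookkeeping the signs coming from the skew-symmetry of $\mathfrak{B}$ and the definition of $\phi_{\mathfrak{B}}$ via the inverse $(\mathfrak{B}^\sharp)^{-1}$.
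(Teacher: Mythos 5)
Your proof is correct. The paper itself gives no proof of this lemma (it is quoted from the reference \cite{bai2024quasi}), so there is nothing to compare against, but your argument is the natural one and all the sign bookkeeping checks out: with $a^{*}=\mathfrak{B}^{\sharp}(a)$, $b^{*}=\mathfrak{B}^{\sharp}(b)$ one indeed has $\mathfrak{B}(P(a),b)=-\langle r,a^{*}\otimes b^{*}\rangle$, $\mathfrak{B}(a,P(b))=\langle \sigma(r),a^{*}\otimes b^{*}\rangle$ and $\mathfrak{B}(a,b)=-\langle \phi_{\mathfrak{B}},a^{*}\otimes b^{*}\rangle$, and since $A$ is finite-dimensional and $\mathfrak{B}^{\sharp}$ is bijective, the simple tensors $a^{*}\otimes b^{*}$ span $A^{*}\otimes A^{*}$, so the identity $\mathfrak{B}(P(a),b)+\mathfrak{B}(a,P(b))+\lambda\mathfrak{B}(a,b)=-\langle r-\sigma(r)+\lambda\phi_{\mathfrak{B}},a^{*}\otimes b^{*}\rangle$ yields both directions of the equivalence at once. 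The parenthetical about a ``dense'' subset is unnecessary in this finite-dimensional setting and could simply be dropped.
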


\begin{proposition}
	Let $(A, \cdot_{A}, \mathfrak{B}, P)$ be a quadratic Rota-Baxter perm algebra of weight $0$.
	Set $r \in A \otimes A$ to be the tensor form of $T_r$ defined by Eq.~\eqref{eq:rb}, that is, 
	\begin{equation}
		T_{r}(a^*) = P((\mathfrak{B}^\sharp)^{-1}(a^*)), \;\; \forall a^* \in A^*. \label{eq:qrb2r}
	\end{equation}
	Then $(A, \cdot_A, \Delta_r)$ is a triangular perm bialgebra, where $\Delta_r$ is defined by Eq.~\eqref{eq:pcbd} via $r$.
\end{proposition}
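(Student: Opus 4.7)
The plan is to reduce the statement to two earlier results: Lemma~\ref{lem:qrbpc}, which translates the compatibility condition \eqref{eq:qrbp} on $P$ into an identity for $r$, and Proposition~\ref{prop:pybe2rbo}, which translates the perm Yang-Baxter equation for $r$ into a Rota-Baxter-type identity \eqref{eq:prbw} for $P$. Since the weight is $\lambda = 0$, both translations should collapse to exactly the data we already have.

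First, I would apply Lemma~\ref{lem:qrbpc} with $\lambda = 0$. The compatibility condition \eqref{eq:qrbp} gives Eq.~\eqref{eq:qrbpc} with $\lambda = 0$, so the lemma yields $r - \sigma(r) = 0$, i.e.\ $r$ is symmetric. In particular, the skew-symmetric part of $r$ vanishes and is trivially $(R,\ad)$-invariant, which is the hypothesis needed to invoke Proposition~\ref{prop:pybe2rbo}.

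Next, I would verify the perm Yang-Baxter equation for $r$ via Proposition~\ref{prop:pybe2rbo}. With $T_{r-\sigma(r)} = 0$, the right-hand side of Eq.~\eqref{eq:prbw} simplifies to
\begin{equation*}
P(a) \cdot_A P(b) = P\bigl(P(a) \cdot_A b + a \cdot_A P(b)\bigr), \;\; \forall a, b \in A,
\end{equation*}
which is precisely the Rota-Baxter identity of weight $0$ that $P$ satisfies by assumption. By Proposition~\ref{prop:pybe2rbo}, $r$ solves the perm Yang-Baxter equation in $(A, \cdot_A)$.

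Finally, since $r$ is symmetric and solves the perm Yang-Baxter equation, $(A, \cdot_A, \Delta_r)$ is a triangular perm bialgebra by the definition of triangular perm bialgebra (the symmetric case of a quasi-triangular perm bialgebra). The argument is essentially a bookkeeping exercise; the only place one might stumble is in matching the direction of the equivalences in Lemma~\ref{lem:qrbpc} and Proposition~\ref{prop:pybe2rbo} to the concrete $P$ defined via Eq.~\eqref{eq:qrb2r}, but this is immediate once one observes that \eqref{eq:qrb2r} is exactly the definition of $P$ from $T_r$ used in those two results.
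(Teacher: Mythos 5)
Your proof is correct and follows the same route as the paper: Lemma~\ref{lem:qrbpc} with $\lambda=0$ gives $r=\sigma(r)$, and Proposition~\ref{prop:pybe2rbo} converts the weight-$0$ Rota-Baxter identity for $P$ into the perm Yang-Baxter equation for $r$, yielding a triangular perm bialgebra. Your write-up merely makes explicit two details the paper leaves implicit (that a vanishing skew-symmetric part is trivially $(R,\ad)$-invariant, and that Eq.~\eqref{eq:prbw} collapses to the weight-$0$ identity when $T_{r-\sigma(r)}=0$), which is fine.
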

\begin{proof}
	By Lemma~\ref{lem:qrbpc}, we have $r-\sigma(r) = 0$. 
	Then $r$ is a symmetric solution of the perm Yang-Baxter equation in $(A, \cdot_{A})$ by Proposition~\ref{prop:pybe2rbo}.
	Therefore, $(A, \cdot_A, \Delta_r)$ is a triangular perm bialgebra.
\end{proof}

\begin{theorem}\label{thm:qrbp2fpb}
	Let $(A, \cdot_A, \mathfrak{B}, P)$ be a quadratic Rota-Baxter perm algebra of weight $\lambda \neq 0$.
	Set $r \in A \otimes A$ to be the tensor form of $T_r$ defined by Eq.~\eqref{eq:qrb2r}.
	Then $(A, \cdot_{A}, \Delta_r)$ is a factorizable perm bialgebra, where $\Delta_r$ is defined by Eq.~\eqref{eq:pcbd} via $r$.
\end{theorem}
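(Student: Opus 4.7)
The plan is to verify the three defining properties of a factorizable perm bialgebra structure on $(A, \cdot_A, \Delta_r)$: (i) $r$ is a solution of the perm Yang-Baxter equation in $(A, \cdot_A)$, (ii) the skew-symmetric part of $r$ is $(R,\ad)$-invariant, and (iii) $T_{r-\sigma(r)}$ is a linear isomorphism. Conditions (ii) and (iii) will both flow from a single identification of $r-\sigma(r)$ as a scalar multiple of $\phi_\mathfrak{B}$, which is the main engine of the argument.

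First, I would apply Lemma~\ref{lem:qrbpc}, instantiated using Eq.~\eqref{eq:qrbp} at weight $\lambda$, to obtain
\begin{equation*}
    r - \sigma(r) = -\lambda\,\phi_\mathfrak{B}.
\end{equation*}
Since $(A, \cdot_A, \mathfrak{B})$ is a quadratic perm algebra, Lemma~\ref{lem:qpiv} gives that $\phi_\mathfrak{B}$ is skew-symmetric and $(R,\ad)$-invariant. Consequently, $r-\sigma(r)$, and hence its half $\frac{1}{2}(r-\sigma(r))$, is $(R,\ad)$-invariant, which settles (ii).

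Next, to verify (i) I would invoke Proposition~\ref{prop:pybe2rbo}: under the $(R,\ad)$-invariance of the skew-symmetric part established above, $r$ solves the perm Yang-Baxter equation if and only if $P$ satisfies Eq.~\eqref{eq:prbw}. The key observation here is that from $r-\sigma(r) = -\lambda\phi_\mathfrak{B}$ and $T_{\phi_\mathfrak{B}} = (\mathfrak{B}^\sharp)^{-1}$ we get $T_{r-\sigma(r)} = -\lambda(\mathfrak{B}^\sharp)^{-1}$, so $T_{r-\sigma(r)}\mathfrak{B}^\sharp(b) = -\lambda b$ for all $b \in A$. Substituting this into Eq.~\eqref{eq:prbw} collapses that identity to
\begin{equation*}
    P(a) \cdot_A P(b) = P\bigl(P(a)\cdot_A b + a \cdot_A P(b) + \lambda\, a \cdot_A b\bigr), \quad \forall a, b \in A,
\end{equation*}
which is exactly the Rota-Baxter identity of weight $\lambda$ that $P$ is assumed to satisfy. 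This proves (i), so $(A, \cdot_A, \Delta_r)$ is a quasi-triangular perm bialgebra.

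Finally, factorizability in (iii) is immediate from the same identification: $T_{r-\sigma(r)} = -\lambda(\mathfrak{B}^\sharp)^{-1}$ is the composition of the nondegeneracy isomorphism $(\mathfrak{B}^\sharp)^{-1}\colon A^* \to A$ with multiplication by the scalar $-\lambda$, which is nonzero by hypothesis, hence a linear isomorphism. The only mild obstacle I anticipate is the bookkeeping in step (i), namely unwinding Eq.~\eqref{eq:qrb2r} correctly and tracking the signs when matching Eq.~\eqref{eq:prbw} against the weight-$\lambda$ Rota-Baxter identity; once the identification $T_{r-\sigma(r)}\mathfrak{B}^\sharp = -\lambda\,\id$ is in hand, the remaining verifications are straightforward applications of results already in place.
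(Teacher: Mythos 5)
Your proposal is correct and follows essentially the same route as the paper: apply Lemma~\ref{lem:qrbpc} to get $r-\sigma(r)=-\lambda\phi_{\mathfrak{B}}$, deduce $(R,\ad)$-invariance from Lemma~\ref{lem:qpiv} and nondegeneracy from $\lambda\neq 0$, and reduce the perm Yang--Baxter equation to the weight-$\lambda$ Rota--Baxter identity via Proposition~\ref{prop:pybe2rbo}. Your explicit check that $T_{r-\sigma(r)}\mathfrak{B}^\sharp=-\lambda\,\id$ turns Eq.~\eqref{eq:prbw} into the Rota--Baxter identity is exactly the substitution the paper leaves implicit.
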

\begin{proof}
	By Lemma~\ref{lem:qrbpc}, we have $r - \sigma(r) = -\lambda \phi_{\mathfrak{B}}$.
	Thus, $T_{r - \sigma(r)} = -\lambda (\mathfrak{B}^{\sharp})^{-1}$ is a linear isomorphism of vector spaces, and by Lemma~\ref{lem:qpiv}, $r - \sigma(r) = -\lambda \phi_{\mathfrak{B}}$ is $(R, \ad)$-invariant.
	Moreover, since $P$ is a Rota-Baxter operator of weight $\lambda$, Eq.~\eqref{eq:prbw} holds in Proposition~\ref{prop:pybe2rbo}.
	Therefore, $r$ is solution of the perm Yang-Baxter equation in $(A, \cdot_{A})$.
	In conclusion, we show that $(A, \cdot_{A}, \Delta_r)$ is a factorizable perm bialgebra.
\end{proof}

\begin{corollary}
	Let $P: A \to A$ be a Rota-Baxter operator of weight $\lambda \neq 0$ on a perm algebra $(A, \cdot_{A})$.
	Let $\{e_1, e_2, \cdots, e_n\}$ be a basis of $A$ and $\{e_1^*, e_2^*, \cdots, e_n^*\}$ be the dual basis of $A^*$.
	Set 
	\begin{equation*}
		r = \sum_{i}((P+\lambda \id)(e_i) \otimes e_i^* + e_i^* \otimes P(e_i)).
	\end{equation*}
	Then $(A \ltimes_{L^*, \ad^*} A^*, \cdot, \Delta_r)$ is a factorizable perm bialgebra, where $\Delta_r$ is defined by Eq.~\eqref{eq:pcbd} via $r$.
\end{corollary}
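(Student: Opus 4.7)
The plan is to derive the corollary as an immediate consequence of Proposition~\ref{prop:smprb} and Theorem~\ref{thm:qrbp2fpb}, with a short bookkeeping step to identify the tensor $r$ with its abstract counterpart produced by those results. First I would apply Proposition~\ref{prop:smprb} to the perm algebra $(A,\cdot_A)$ and the Rota-Baxter operator $P$ of weight $\lambda$, obtaining the quadratic Rota-Baxter perm algebra $(A \ltimes_{L^*, \ad^*} A^*, \cdot, \mathfrak{B}_d, \widetilde P)$ of weight $\lambda$, where
\begin{equation*}
  \widetilde P(a + a^*) = P(a) - (\lambda \id + P)^*(a^*), \qquad \forall a \in A,\; a^* \in A^*.
\end{equation*}

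Since $\lambda \neq 0$, Theorem~\ref{thm:qrbp2fpb} then applies and yields a factorizable perm bialgebra $(A \ltimes_{L^*, \ad^*} A^*, \cdot, \Delta_{r'})$, where $r' \in (A \oplus A^*) \otimes (A \oplus A^*)$ is characterized by Eq.~\eqref{eq:qrb2r}, namely $T_{r'}(\xi) = \widetilde P\bigl((\mathfrak{B}_d^{\sharp})^{-1}(\xi)\bigr)$ for all $\xi \in (A \oplus A^*)^*$. It remains only to check that this $r'$ coincides with the explicit $r$ in the statement, at which point the corollary follows.

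To carry out this identification, I would use the basis $\{e_1,\ldots,e_n, e_1^*,\ldots,e_n^*\}$ of $A \oplus A^*$ together with its dual basis $\{\epsilon_1,\ldots,\epsilon_n, \epsilon^1,\ldots,\epsilon^n\}$ in $(A \oplus A^*)^*$, where $\epsilon_i(e_j) = \delta_{ij}$, $\epsilon^i(e_j^*) = \delta_{ij}$, and the other pairings vanish. A direct computation from the definition $\mathfrak{B}_d(a + a^*, b + b^*) = \langle a, b^*\rangle - \langle a^*, b\rangle$ shows that $\mathfrak{B}_d^{\sharp}(e_i) = \epsilon^i$ and $\mathfrak{B}_d^{\sharp}(e_i^*) = -\epsilon_i$, so $(\mathfrak{B}_d^{\sharp})^{-1}(\epsilon^i) = e_i$ and $(\mathfrak{B}_d^{\sharp})^{-1}(\epsilon_i) = -e_i^*$. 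Reconstructing the tensor via $r' = \sum_i v_i \otimes T_{r'}(v^i)$ over the chosen basis $\{v_i\}$ and its dual $\{v^i\}$, this gives
\begin{equation*}
  r' = \sum_i e_i \otimes (\lambda\,\id + P)^*(e_i^*) + \sum_i e_i^* \otimes P(e_i).
\end{equation*}
Finally, the standard matrix-form identity $\sum_i e_i \otimes P^*(e_i^*) = \sum_i P(e_i) \otimes e_i^*$ (and its trivial analogue with $P$ replaced by $\lambda\id$) converts the first summand into $\sum_i (P + \lambda\id)(e_i) \otimes e_i^*$, yielding exactly the $r$ in the statement.

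The main obstacle is no more than careful bookkeeping with the two halves of the dual basis and the sign convention built into $\mathfrak{B}_d$; once the identification $r' = r$ is settled, factorizability and the formula $\Delta_r$ given by Eq.~\eqref{eq:pcbd} transfer verbatim from Theorem~\ref{thm:qrbp2fpb}, completing the proof.
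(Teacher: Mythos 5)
Your proposal is correct and follows essentially the same route as the paper: apply Proposition~\ref{prop:smprb} to get the quadratic Rota-Baxter perm algebra $(A \ltimes_{L^*, \ad^*} A^*, \cdot, \mathfrak{B}_d, P - (\lambda \id + P)^*)$, then invoke Theorem~\ref{thm:qrbp2fpb} after identifying the explicit $r$ with the tensor determined by Eq.~\eqref{eq:qrb2r}. The only (immaterial) difference is the direction of the identification — you reconstruct $r'$ from $T_{r'} = \widetilde P (\mathfrak{B}_d^{\sharp})^{-1}$ on a basis, whereas the paper computes $T_r \mathfrak{B}_d^{\sharp}$ directly from the given $r$ — and your sign and basis bookkeeping checks out.
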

\begin{proof}
	By Proposition~\ref{prop:smprb}, $(A \ltimes_{L^*, \ad^*} A^*, \cdot, \mathfrak{B}_d, P - (\lambda \id + P)^*)$ is quadratic Rota-Baxter perm algebra of weight $\lambda$.
	For all $a \in A$ and $a^* \in A^*$, we have 
	\begin{equation*}
		\mathfrak{B}_d^\sharp(a+a^*) = -a^*+a.
	\end{equation*}
	Thus, 
	\begin{align*}
		&T_{r}\mathfrak{B}_d^\sharp(a+a^*) = T_{r}(-a^*+a) = \sum_{i}\langle -a^*, (P+\lambda \id) e_i\rangle e_i^* + \sum_{i}\langle a, e_i^*\rangle P(e_i) \\
		&=  P(\sum_{i}\langle a, e_i^*\rangle e_i) -\sum_{i} \langle (P+\lambda \id)^*(a^*),  e_i\rangle e_i^*  = P(a)-(P+\lambda \id)^*(a^*),
	\end{align*}
	that is, Eq.~\eqref{eq:qrb2r} holds where $P$ is replaced by $P - (\lambda \id + P)^*$ and $\mathfrak{B}$ by $\mathfrak{B}_d$.
	Therefore, by Theorem~\ref{thm:qrbp2fpb}, $(A \ltimes_{L^*, \ad^*} A^*, \cdot, \Delta_r)$ is a factorizable perm bialgebra.
\end{proof}

As a counterpart to Theorem~\ref{thm:qrbp2fpb}, the following theorem shows that a factorizable perm bialgebra induces a quadratic Rota-Baxter perm algebra of nonzero weight, thereby refining the one-to-one correspondence between factorizable perm bialgebras and quadratic Rota-Baxter perm algebras of nonzero weights.
\begin{theorem}\label{thm:fpb2qrbp}
	Let $(A, \cdot_A, \Delta_r)$ be a factorizable perm bialgebra.
	Let $\mathfrak{B}$ be the bilinear form on $A$ defined by 
	\begin{equation}
		\mathfrak{B}(a, b) = - \lambda \langle T_{r-\sigma(r)}^{-1}(a), b\rangle, \;\; \lambda \neq 0, \; \forall a, b \in A, \label{eq:fpb2qrp}
	\end{equation}
	and $P: A \to A$ be the linear map defined by Eq.~\eqref{eq:rb}.
	Then $(A, \cdot_A, \mathfrak{B}, P)$ is quadratic Rota-Baxter perm algebra of weight $\lambda$.
\end{theorem}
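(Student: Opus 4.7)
The plan is to verify the three properties defining a quadratic Rota-Baxter perm algebra: (i) $(A,\cdot_A,\mathfrak{B})$ is a quadratic perm algebra, (ii) $P$ is a Rota-Baxter operator of weight $\lambda$, and (iii) the compatibility condition \eqref{eq:qrbp} holds. The strategic observation is that the definition of $\mathfrak{B}$ in \eqref{eq:fpb2qrp} means precisely $\mathfrak{B}^{\sharp} = -\lambda\, T_{r-\sigma(r)}^{-1}$, so $(\mathfrak{B}^{\sharp})^{-1} = -\tfrac{1}{\lambda} T_{r-\sigma(r)}$. Since $T_{\phi_{\mathfrak{B}}} = (\mathfrak{B}^{\sharp})^{-1}$ by definition of $\phi_{\mathfrak{B}}$, this yields the clean identity
\begin{equation*}
    r - \sigma(r) = -\lambda\, \phi_{\mathfrak{B}},
\end{equation*}
which is exactly relation \eqref{eq:rsric} in Lemma~\ref{lem:qrbpc}. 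This identity is the bridge between the factorizable data and the quadratic Rota-Baxter data, and most of the work reduces to reading off consequences of it.

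For (i), nondegeneracy of $\mathfrak{B}$ is immediate from the factorizability hypothesis, which says $T_{r-\sigma(r)}$ is a linear isomorphism. Skew-symmetry of $\mathfrak{B}$ follows because $r - \sigma(r)$ is manifestly skew-symmetric, hence so is $\phi_{\mathfrak{B}} = -\tfrac{1}{\lambda}(r-\sigma(r))$. For invariance, I would invoke Lemma~\ref{lem:qpiv}: one just needs $\phi_{\mathfrak{B}}$ to be $(R,\ad)$-invariant, and this is a direct consequence of the standing $(R,\ad)$-invariance of the skew-symmetric part of $r$ that is built into the notion of a quasi-triangular (hence factorizable) perm bialgebra.

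For (iii), the compatibility \eqref{eq:qrbp} is just Eq.~\eqref{eq:qrbpc} of Lemma~\ref{lem:qrbpc}, which is equivalent to $r - \sigma(r) = -\lambda\, \phi_{\mathfrak{B}}$ already established; this direction of Lemma~\ref{lem:qrbpc} delivers \eqref{eq:qrbp} without further calculation. For (ii), the plan is to apply Proposition~\ref{prop:pybe2rbo}: since $r$ solves the perm Yang-Baxter equation with $(R,\ad)$-invariant skew-symmetric part (the defining data of the factorizable bialgebra), we obtain
\begin{equation*}
    P(a) \cdot_A P(b) = P\bigl(P(a)\cdot_A b + a \cdot_A P(b) - a \cdot_A T_{r-\sigma(r)} \mathfrak{B}^{\sharp}(b)\bigr), \;\; \forall a, b \in A.
\end{equation*}
The key simplification is the evaluation $T_{r-\sigma(r)} \mathfrak{B}^{\sharp}(b) = T_{r-\sigma(r)} \circ (-\lambda T_{r-\sigma(r)}^{-1})(b) = -\lambda b$, which turns the bracketed expression on the right into $P(a)\cdot_A b + a \cdot_A P(b) + \lambda\, a \cdot_A b$, exactly the Rota-Baxter identity of weight $\lambda$.

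I do not foresee a genuine obstacle here; the proof is essentially a reassembly of earlier results by means of the single identity $r - \sigma(r) = -\lambda\, \phi_{\mathfrak{B}}$. The only point that requires attention is ensuring the conventions are compatible, particularly the sign in \eqref{eq:fpb2qrp} and the direction of the implications in Lemma~\ref{lem:qrbpc} and Proposition~\ref{prop:pybe2rbo}, since both lemmas are stated as equivalences and only one direction is used in each case.
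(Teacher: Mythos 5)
Your proposal is correct and follows essentially the same route as the paper: derive $r-\sigma(r)=-\lambda\,\phi_{\mathfrak{B}}$ from the definition of $\mathfrak{B}$, then obtain the quadratic perm algebra structure from Lemma~\ref{lem:qpiv}, the weight-$\lambda$ Rota-Baxter identity from Proposition~\ref{prop:pybe2rbo} via $T_{r-\sigma(r)}\mathfrak{B}^{\sharp}=-\lambda\,\id$, and the compatibility condition from Lemma~\ref{lem:qrbpc}. No gaps.
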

\begin{proof}
	Since $T_{r-\sigma(r)}$ is nondegenerate, by Eq.~\eqref{eq:fpb2qrp}, $\mathfrak{B}$ is a nondegenerate bilinear form on $(A, \cdot_{A})$.
	Then, by Eq.~\eqref{eq:fpb2qrp} again, we have $\mathfrak{B}^\sharp = -\lambda T_{r-\sigma(r)}^{-1}$ and thus $T_{r - \sigma(r)} = - \lambda (\mathfrak{B}^\sharp)^{-1} = -\lambda T_{\phi_{\mathfrak{B}}}$.
	Hence, $r - \sigma(r) = - \lambda \phi_{\mathfrak{B}}$.
	Note that $r-\sigma(r)$ is skew-symmetric and $(R, \ad)$-invariant. 
	By Lemma~\ref{lem:qpiv}, $(A, \cdot_{A}, \mathfrak{B})$ is a quadratic perm algebra.
	Moreover, substituting $T_{r-\sigma(r)} = -\lambda (\mathfrak{B}^\sharp)^{-1}$ into Eq.~\eqref{eq:prbw} of Proposition~\ref{prop:pybe2rbo}, we show that $P$ is a Rota-Baxter operator of weight $\lambda$.
	Finally, by Lemma~\ref{lem:qrbpc}, Eq.~\eqref{eq:qrbpc} holds.
	In conclusion, $(A, \cdot_A, \mathfrak{B}, P)$ is a quadratic Rota-Baxter perm algebra of weight $\lambda$.
\end{proof}

Combining Theorem~\ref{thm:qrbp2fpb} and \ref{thm:fpb2qrbp}, given a perm algebra $(A, \cdot_{A})$, there is a factorizable perm bialgebra $(A, \cdot_{A}, \Delta_r)$ if and only if there is a quadratic Rota-Baxter perm algebra $(A, \cdot_{A}, \mathfrak{B}, P)$ of weight $\lambda \neq 0$.
Here the mutual relation is given by 
\begin{equation*}
	r - \sigma(r) = - \lambda \phi_{\mathfrak{B}}, T_{r} = P(\mathfrak{B}^\sharp)^{-1}.
\end{equation*}
That is, there is a one-to-one correspondence between factorizable perm bialgebras and quadratic Rota-Baxter perm algebras of nonzero weight.

\begin{proposition}
	Let $(A, \cdot_{A}, \Delta_r)$ be a factorizable perm bialgebra corresponding to a quadratic Rota-Baxter perm algebra $(A, \cdot_{A}, \mathfrak{B}, P)$ of weight $\lambda \neq 0$ via Theorem~\ref{thm:qrbp2fpb} and \ref{thm:fpb2qrbp}.
	Then the factorizable perm bialgebra $(A, \cdot_{A}, \Delta_{\sigma(r)})$ corresponds to the quadratic Rota-Baxter perm algebra $(A, \cdot_{A}, -\mathfrak{B}, -(P+\lambda \id))$ of weight $\lambda$.
	In conclusion, we have the following commutative diagram.
	\begin{equation*}
		\xymatrix@C=4cm@R=1.5cm{
			\txt{$(A, \cdot_{A}, \Delta_r)$} \ar@{<->}[r]^-{{\rm Cor.}~\ref{coro:rsrf}} \ar[d]<-1ex>_-{{\rm Thm.}~\ref{thm:fpb2qrbp}} & \txt{$(A, \cdot_{A}, \Delta_{\sigma(r)})$} \ar[d]<-1ex>_-{{\rm Thm.}~\ref{thm:fpb2qrbp}} \\
			\txt{$(A, \cdot_{A}, \mathfrak{B}, P)$} \ar@{<->}[r]^-{{\rm Prop.}~\ref{prop:qrb2sqrb}} \ar[u]<-1ex>_-{{\rm Thm.}~\ref{thm:qrbp2fpb}} & \txt{$(A, \cdot_{A}, -\mathfrak{B}, -(P+\lambda \id))$} \ar[u]<-1ex>_-{{\rm Thm.}~\ref{thm:qrbp2fpb}}
		}
	\end{equation*}
\end{proposition}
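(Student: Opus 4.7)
The plan is to apply Theorem~\ref{thm:fpb2qrbp} directly to the factorizable perm bialgebra $(A, \cdot_A, \Delta_{\sigma(r)})$ (which is factorizable by Corollary~\ref{coro:rsrf}) and then identify the resulting quadratic Rota-Baxter datum with $(A, \cdot_A, -\mathfrak{B}, -(P+\lambda\id))$ via a short computation that rests on the key identity $r-\sigma(r) = -\lambda\phi_{\mathfrak{B}}$ from the correspondence.

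First I would compute the bilinear form $\mathfrak{B}'$ assigned to $(A, \cdot_A, \Delta_{\sigma(r)})$ by Eq.~\eqref{eq:fpb2qrp}. Since $T_{\sigma(r)-\sigma(\sigma(r))} = -T_{r-\sigma(r)}$, inversion gives $T_{\sigma(r)-\sigma(\sigma(r))}^{-1} = -T_{r-\sigma(r)}^{-1}$, and therefore
\begin{equation*}
	\mathfrak{B}'(a,b) = -\lambda \langle T_{\sigma(r)-\sigma(\sigma(r))}^{-1}(a), b\rangle = \lambda \langle T_{r-\sigma(r)}^{-1}(a), b\rangle = -\mathfrak{B}(a,b),
\end{equation*}
so that $\mathfrak{B}' = -\mathfrak{B}$ and $(\mathfrak{B}')^\sharp = -\mathfrak{B}^\sharp$.

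Next I would compute the Rota-Baxter operator $P'$ associated to $\Delta_{\sigma(r)}$ by Eq.~\eqref{eq:rb}, namely $P'(a) = T_{\sigma(r)}(\mathfrak{B}')^\sharp(a) = -T_{\sigma(r)}\mathfrak{B}^\sharp(a)$. The correspondence of Theorems~\ref{thm:qrbp2fpb} and \ref{thm:fpb2qrbp} gives $r-\sigma(r) = -\lambda\phi_{\mathfrak{B}}$, hence $T_{\sigma(r)} = T_r + \lambda T_{\phi_{\mathfrak{B}}} = T_r + \lambda(\mathfrak{B}^\sharp)^{-1}$. Composing with $\mathfrak{B}^\sharp$ yields $T_{\sigma(r)}\mathfrak{B}^\sharp = P + \lambda\id$, and therefore $P' = -(P+\lambda\id)$. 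This identifies the quadratic Rota-Baxter perm algebra attached to $(A, \cdot_A, \Delta_{\sigma(r)})$ with $(A, \cdot_A, -\mathfrak{B}, -(P+\lambda\id))$, which is exactly the image of $(A, \cdot_A, \mathfrak{B}, P)$ under the involution of Proposition~\ref{prop:qrb2sqrb}.

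Finally, for the commutative diagram, the horizontal arrows are the pair of involutions $\Delta_r \leftrightarrow \Delta_{\sigma(r)}$ (Corollary~\ref{coro:rsrf}) and $(\mathfrak{B},P) \leftrightarrow (-\mathfrak{B}, -(P+\lambda\id))$ (Proposition~\ref{prop:qrb2sqrb}), while the vertical arrows are the bijections of Theorems~\ref{thm:qrbp2fpb} and \ref{thm:fpb2qrbp}; commutativity in one direction is precisely the computation above, and the other direction is either symmetric or follows from the fact that the vertical assignments are mutually inverse. The only nontrivial step is the identification $T_{\sigma(r)}\mathfrak{B}^\sharp = P + \lambda\id$, which I expect to be the main (though brief) obstacle; everything else is formal manipulation of the two involutions.
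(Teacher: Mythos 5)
Your proposal is correct and follows essentially the same route as the paper: apply Theorem~\ref{thm:fpb2qrbp} to $(A,\cdot_A,\Delta_{\sigma(r)})$, get $\mathfrak{B}'=-\mathfrak{B}$ from $T_{\sigma(r)-r}=-T_{r-\sigma(r)}$, and derive $P'=-(P+\lambda\id)$ from the identity $r-\sigma(r)=-\lambda\phi_{\mathfrak{B}}$ (your rearrangement $T_{\sigma(r)}\mathfrak{B}^\sharp=P+\lambda\id$ is the same computation the paper performs in a slightly different order). The paper handles the converse direction with the same brevity you do, so nothing is missing.
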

\begin{proof}
	By Theorem~\ref{thm:fpb2qrbp}, $(A, \cdot_{A}, \Delta_{\sigma(r)})$ induces a quadratic Rota-Baxter perm algebra $(A, \cdot_{A}, \mathfrak{B}^\prime, P^\prime)$ of weight $\lambda$, where 
	\begin{equation*}
		\mathfrak{B}^\prime(a, b) \overset{\eqref{eq:fpb2qrp}}{=} - \lambda \langle T_{\sigma(r) - r}^{-1}\rangle = -\mathfrak{B}(a, b), 
	\end{equation*}
	and 
	\begin{align*}
		&P^\prime(a) \overset{\eqref{eq:rb}}{=} T_{\sigma(r)} (\mathfrak{B}^\prime)^\sharp(a) =  - T_{\sigma(r)} \mathfrak{B}^\sharp(a) = \lambda T_{\sigma(r)} T_{r - \sigma(r)}^{-1}(a) = \lambda (T_{r} - T_{r - \sigma(r)}) T_{r - \sigma(r)}^{-1}(a) \\
		&= -\lambda a + \lambda T_{r} T_{r - \sigma(r)}^{-1}(a) = -\lambda a - T_{r} \mathfrak{B}^\sharp(a) \overset{\eqref{eq:rb}}{=} -(\lambda \id + P)(a),
	\end{align*}
	for all $a, b \in A$.
	Thus, $(A, \cdot_{A}, \Delta_{\sigma(r)})$ induces a quadratic Rota-Baxter perm algebra $(A$, $\cdot_{A}$, $-\mathfrak{B}$, $-(\lambda \id + P))$ of weight $\lambda$.
	Conversely, by a similar argument, we show that the quadratic Rota-Baxter perm algebra $(A, \cdot_{A}, -\mathfrak{B}, -(\lambda \id + P))$ of weight $\lambda$ induces the factorizable perm bialgebra $(A, \cdot_{A}, \Delta_{\sigma(r)})$ via Theorem~\ref{thm:qrbp2fpb}.
\end{proof}



\smallskip

\noindent
{\bf Data availability. } No new data were created or analyzed in this study.



\begin{thebibliography}{99}

\bibitem{aguiar2000infinitesimal}
M.~Aguiar, Infinitesimal Hopf algebras, \textit{Contemporary Mathematics} 267 (2000), 1--30.

\bibitem{aguiar2001associative}
M.~Aguiar, On the associative analog of Lie bialgebras, \textit{J. Algebra} 244 (2001), 492--532.

\bibitem{bai2010double}
C.~Bai, Double constructions of Frobenius algebras, Connes cocycles and their duality, \textit{J. Noncommutative Geom.} 4 (2010), 475--530.

\bibitem{bai2010nonabelian}
C.~Bai, L.~Guo and X.~Ni, Nonabelian generalized Lax pairs, the classical Yang-Baxter equation and post-Lie algebras, \textit{Commun. Math. Phys.} 297 (2010), 553--596.

\bibitem{bai2008left}
C.~Bai, Left-symmetric bialgebras and an analogue of the classical Yang–Baxter equation, \textit{Commun. Contemp. Math.}, 10 (2008), 221-260.


\bibitem{bai2024quasi}
C.~Bai, G.~Liu, Y.~Sheng and R.~Tang, Quasi-triangular, factorizable Leibniz bialgebras and relative Rota-Baxter operators, \textit{Forum Mathematicum}, 2024.

\bibitem{chapoton2002endofoncteur}
F.~Chapoton, Un endofoncteur de la cat{\'e}gorie des op{\'e}rades, in: \textit{Dialgebras and related operads}, Springer, 2002, pages 105--110.

\bibitem{chari1995guide}
V.~Chari and A.~Pressley, \textit{A guide to quantum groups}, Cambridge University Press, 1995.

\bibitem{drinfeld1986quantum}
V.~Drinfeld, Quantum groups, in: \textit{Proc. Int. Congr. Math.} 1 (1986), 798--820.

\bibitem{drinfeld1983hamiltonian}
V.~G. Drinfeld, Hamiltonian structures on Lie groups, Lie bialgebras and the geometric meaning of the classical Yang-Baxter equations, \textit{Soviet Math. Dokl.} 27 (1983), 68--71.

\bibitem{joni1979coalgebras}
S.~Joni and G.-C. Rota, Coalgebras and bialgebras in combinatorics, \textit{Stud. Appl. Math.} 61 (1979), 93--139.

\bibitem{kassel2012quantum}
C.~Kassel, \textit{Quantum groups}, Springer Science \& Business Media, 2012.

\bibitem{knapp2002lie}
A.~W. Knapp, \textit{Lie Groups Beyond an Introduction}, Springer Science \& Business Media, 2002.

\bibitem{kock2004frobenius}
J.~Kock, \textit{Frobenius algebras and 2D topological quantum field theories}, Cambridge University Press, 2004.

\bibitem{lang2023factorizable}
H.~Lang and Y.~Sheng, Factorizable Lie bialgebras, quadratic Rota-Baxter Lie algebras and Rota-Baxter Lie bialgebras, \textit{Commun. Math. Phys.} 397 (2023), 763--791.

\bibitem{lauda2008open}
A.~D. Lauda and H.~Pfeiffer, Open--closed strings: Two-dimensional extended TQFTs and Frobenius algebras, \textit{Topology Appl.} 155 (2008), 623--666.

\bibitem{lin2025infinite}
Y.~Lin, P.~Zhou and C.~Bai, Infinite-dimensional {Lie} bialgebras via affinization of perm bialgebras and pre-Lie bialgebras, \textit{J. Algebra} 663 (2025), 210--258.

\bibitem{wang2024quasi}
Y.~Wang, C.~Bai, J.~Liu, Quasi-triangular pre-Lie bialgebras, factorizable pre-Lie bialgebras and Rota-Baxter pre-Lie algebras, \textit{J. Geom. Phys.} 199 (2024), 105146. 


\bibitem{reshetikhin1988quantum}
N.~Y. Reshetikhin and M.~A. Semenov-Tian-Shansky, Quantum R-matrices and factorization problems, \textit{J. Geom. Phys.} 5 (1988), 533--550.

\bibitem{semenov2003integrable}
M.~A. Semenov-Tian-Shansky, Integrable systems and factorization problems, in: \textit{Factorization and Integrable Systems: Summer School in Faro, Portugal, September 2000}, Springer, 2003, pages 155--218.

\bibitem{semenov1983classical}
M.~Semenov-Tyan-Shanskii, What is a classical $r$-matrix?, \textit{Funct. Anal. Appl.} 17 (1983), 259--272.

\bibitem{sheng2023quasi}
Y.~Sheng and Y.~Wang, Quasi-triangular and factorizable antisymmetric infinitesimal bialgebras, \textit{J. Algebra} 628 (2023), 415--433.

\bibitem{tang2022leibniz}
R.~Tang, Y.~Sheng, Leibniz bialgebras, relative Rota–Baxter operators, and the classical Leibniz Yang–Baxter equation, \textit{J. Noncommut. Geom.} 16 (2022), 1179-1211.

\bibitem{zhelyabin1997jordan}
V.~N. Zhelyabin, Jordan bialgebras and their connection with {Lie} bialgebras, \textit{Algebra Log.} 36 (1997), 3--25.


\end{thebibliography}
\end{document}